\theoremstyle{plain}
\newtheorem{thm}{\protect\theoremname}
\theoremstyle{definition}
\newtheorem{defn}[thm]{\protect\definitionname}
\theoremstyle{plain}
\newtheorem{fact}[thm]{\protect\factname}
\theoremstyle{remark}
\newtheorem{rem}[thm]{\protect\remarkname}
\theoremstyle{plain}
\newtheorem{cor}[thm]{\protect\corollaryname}
\theoremstyle{definition}
\newtheorem{example}[thm]{\protect\examplename}
\theoremstyle{plain}
\newtheorem{prop}[thm]{\protect\propositionname}
\theoremstyle{plain}
\newtheorem{lem}[thm]{\protect\lemmaname}
\theoremstyle{remark}
\newtheorem{claim}[thm]{\protect\claimname}
\date{}
\providecommand{\claimname}{Claim}
\providecommand{\corollaryname}{Corollary}
\providecommand{\definitionname}{Definition}
\providecommand{\examplename}{Example}
\providecommand{\factname}{Fact}
\providecommand{\lemmaname}{Lemma}
\providecommand{\propositionname}{Proposition}
\providecommand{\remarkname}{Remark}
\providecommand{\theoremname}{Theorem}
\begin{document}
	\global\long\def\goinf{\rightarrow\infty}
	\global\long\def\gozero{\rightarrow0}
	\global\long\def\bra{\langle}
	\global\long\def\ket{\rangle}
	\global\long\def\union{\cup}
	\global\long\def\intersect{\cap}
	\global\long\def\abs#1{\left|#1\right|}
	\global\long\def\norm#1{\left\Vert #1\right\Vert }
	\global\long\def\floor#1{\left\lfloor #1\right\rfloor }
	\global\long\def\ceil#1{\left\lceil #1\right\rceil }
	\global\long\def\expect{\mathbb{E}}
	\global\long\def\e{\mathbb{E}}
	\global\long\def\r{\mathbb{R}}
	\global\long\def\n{\mathbb{N}}
	\global\long\def\q{\mathbb{Q}}
	\global\long\def\c{\mathbb{C}}
	\global\long\def\z{\mathbb{Z}}
	\global\long\def\grad{\nabla}
	\global\long\def\t{^{\prime}}
	\global\long\def\all{\forall}
	\global\long\def\eps{\varepsilon}
	\global\long\def\quadvar#1{V_{2}^{\pi}\left(#1\right)}
	\global\long\def\cal#1{\mathcal{#1}}
	\global\long\def\cross{\times}
	\global\long\def\del{\nabla}
	\global\long\def\parx#1{\frac{\partial#1}{\partial x}}
	\global\long\def\pary#1{\frac{\partial#1}{\partial y}}
	\global\long\def\parz#1{\frac{\partial#1}{\partial z}}
	\global\long\def\part#1{\frac{\partial#1}{\partial t}}
	\global\long\def\partheta#1{\frac{\partial#1}{\partial\theta}}
	\global\long\def\parr#1{\frac{\partial#1}{\partial r}}
	\global\long\def\curl{\nabla\times}
	\global\long\def\rotor{\nabla\times}
	\global\long\def\one{\mathbf{1}}
	\global\long\def\Hom{\text{Hom}}
	\global\long\def\pr#1{\text{Pr}\left[#1\right]}
	\global\long\def\almost{\mathbf{\approx}}
	\global\long\def\tr{\text{Tr}}
	\global\long\def\var{\text{Var}}
	\global\long\def\onenorm#1{\left\Vert #1\right\Vert _{1}}
	\global\long\def\twonorm#1{\left\Vert #1\right\Vert _{2}}
	\global\long\def\Inj{\mathfrak{Inj}}
	\global\long\def\inj{\mathsf{inj}}
	
	\global\long\def\g{\mathfrak{\cal G}}
	\global\long\def\f{\mathfrak{\cal F}}
	\newcommand{\bR}{\mathbb{R}}

\title{Decomposition of mean-field Gibbs distributions into product measures}

\author{Ronen Eldan\thanks{Weizmann Institute of Science. Email: ronen.eldan@weizmann.ac.il.
		Partially supported by the Israel Science Foundation, grant 715/16.} ~and Renan Gross\thanks{Weizmann Institute of Science. Email: renan.gross@weizmann.ac.il}}
\maketitle
\begin{abstract}
We show that under a low complexity condition on the gradient of a
Hamiltonian, Gibbs distributions on the Boolean hypercube are approximate
mixtures of product measures whose probability vectors are critical
points of an associated mean-field functional. This extends a previous
work by the first author. As an application, we demonstrate how this
framework helps characterize both Ising models satisfying a mean-field
condition and the conditional distributions which arise in the emerging
theory of nonlinear large deviations, both in the dense case and in
the polynomially-sparse case. 
\end{abstract}
\tableofcontents{}

\section{Introduction}

Let $n>0$ and let $f:\left\{ -1,1\right\} ^{n}\to\bR$ be a function.
A probability measure $\nu$ on $\left\{ -1,1\right\} ^{n}$ is called
a \emph{Gibbs distribution with Hamiltonian $f$} if for $X\sim\nu$,
\[
\pr{X=x}=\exp\left(f\left(x\right)\right)/Z,
\]
where $Z$ is a normalizing constant. We denote such a distribution
by $X_{n}^{f}$. Gibbs distributions are central to statistical physics,
and appear in applications in computer science, statistics, and economics.
However, many important Hamiltonians are far from being analytically
tractable.

One method to tackle the difficulties entrenched in such Hamiltonians
is via mean-field approximations. This method goes back to Curie and
Weiss and has long been widely used by physicists. More recently,
such approximations were established in rigor, see for example \cite{BasakMukherjee2017}.

For the case of Gibbs distributions on the Boolean hypercube, \cite{2016_eldan_gaussian_width}
showed that if the image of the gradient of the Hamiltonian $f$ has
small enough Gaussian-width and Lipschitz constants, then the partition
function can be approximated by applying the mean-field variant of
the Gibbs variational principle. Further, under the same conditions,
$X_{n}^{f}$ can be approximated by a mixture of product measures.
This improves an earlier result by Chatterjee and Dembo \cite{chatterjee_dembo_2014}
who consider a slightly different notion of complexity.

In this paper, we extend the framework introduced in \cite{2016_eldan_gaussian_width}
by showing that if the discrete gradient $\grad f$ also has a small
enough Lipschitz constant, then the product measures described above
are close to critical points of an associated variational functional
which corresponds to the so-called \emph{mean-field equations}. This
gives a more precise characterization of the mixture. 

An interesting feature of our framework is that it allows us to effectively
bypass the need to obtain an accurate approximation of the normalizing
constant in the route to understanding the Gibbs distribution. Even
though the approximations to the normalizing constant obtained by
the framework are far from sharp (they miss by a factor of $e^{o\left(n\right)}$
as seen in the examples in \cite{2016_eldan_gaussian_width}), our
results still manage to give information about the set where most
of the mass resides.

The following is an overview of our results. 
\begin{itemize}
	\item In \textbf{Theorem \ref{thm:main_theorem}}, we show that if the Hamiltonian
	$f$ has low complexity and satisfies a Lipschitz condition, the corresponding
	Gibbs distribution behaves like a mixture of densities of vectors
	whose entries are i.i.d Bernoulli random variables, and whose expectations
	$X$ satisfy
	\[
	\onenorm{X-\tanh\left(\grad f\left(X\right)\right)}=o\left(n\right),
	\]
	where the $\tanh$ is applied entrywise. 
	\item As an example of using this bound, we demonstrate in \textbf{Corollaries
		\ref{cor:general_ising} and \ref{cor:curie_wiess_ising}} that Ising
	models satisfying a mean-field assumption can be decomposed into product
	measures.
	\item \textbf{Theorem \ref{thm:composition_theorem} }concerns compositions:
	If a function $h:\bR\to\bR$ has small enough derivatives, then the
	function $h\circ f$ also satisfies Theorem \ref{thm:main_theorem}. 
	\item As an example of this composition, we demonstrate in \textbf{Theorem
		\ref{thm:large_deviations}} that the conditional distribution $\pr{Y=y\mid f\left(Y\right)\geq tn}$
	arising in large deviation theory can be approximated by a smoothed-cutoff
	distribution that can be decomposed into product measures, each satisfying
	an equation which arises from the Lagrange multiplier problem associated
	with the rate function. 
\end{itemize}
In the sequel work \cite{eldan_gross_exponential_random_graphs},
we apply Theorem \ref{thm:main_theorem} to exponential random graphs,
improving a previously known characterization.

\section{Background and notation}

We denote the Boolean hypercube by $\mathcal C_{n}=\left\{ -1,1\right\} ^{n}$
and the continuous hypercube by $\overline{\mathcal C_{n}}=\left[0,1\right]^{n}$.
The uniform measure on $\mathcal C_{n}$ is denoted by $\mu$. The space
of all product measures on $\mathcal C_{n}$ is denoted $\mathcal{PM}_{n}$.
For a vector $x\in\bR^{n}$, we denote its one-norm by 
\[
\onenorm x=\sum_{i=1}^{n}\abs{x_{i}}.
\]

\subsection{Two motivating examples of Hamiltonians}

\subsubsection{The Ising model\label{subsec:ising_intro_section}}

An Ising model on $n$ sites can be described as follows: Let $x\in\mathcal C_{n}$
represent $n$ interacting sites that can be in one of two states.
Let $A\in\bR^{n\times n}$ be a real symmetric matrix with $0$ on
the diagonal representing the intensity of interaction between the
sites, so that the interaction between site $i$ and site $j$ is
$A_{ij}$. Let $\mu\in\bR^{n}$ be a vector representing magnetic field
strengths, so that site $i$ feels a magnetic field $\mu_{i}$. The
Hamiltonian for the system is then defined as
\[
f\left(x\right)=\left\langle x,Ax\right\rangle +\left\langle \mu,x\right\rangle .
\]
If $\tr A^{2}=o\left(n\right)$, we say that the model satisfies the
\emph{mean-field} assumption \cite{BasakMukherjee2017}. We also assume
that both $\mu_{\max}$ and $\max_{i\in\left[n\right]}\sum_{j\in\left[n\right]}\abs{A_{ij}}$
are $O\left(1\right)$, which amounts to the force acting on a single
site being bounded.

\subsubsection{Nonlinear large deviations\label{subsec:large_deviations_intro_section}}

Let $f:\mathcal C_{n}\to\bR$ be a Hamiltonian. For $0\leq p\leq1$, define
$\mu_{p}$ to be the measure on $\mathcal C_{n}$ where every entry is
an i.i.d Bernoulli random variable with success probability $p$.
Let $t\in\bR$ be a real number. The two central questions in the field
of large deviation theory are:
\begin{enumerate}
	\item \label{enu:intro_ld_probability}For $Y\sim\mu_{p}$, what is the
	probability $\pr{f\left(Y\right)\geq tn}$?
	\item \label{enu:intro_ld_conditional}For $Y\sim\mu_{p}$, what is the
	conditional distribution $\pr{Y=y\mid f\left(Y\right)\geq tn}$?
\end{enumerate}
One line of approach to answering these questions is to approximate
$\pr{f\left(Y\right)\geq tn}$ and $\pr{Y=y\mid f\left(Y\right)\geq tn}$
by using Gibbs distributions. For example, observe that the conditional
distribution $\pr{Y=y\mid f\left(y\right)\geq tn}$ may be obtained
from a Gibbs distribution with a ``cutoff Hamiltonian'' $\tilde{f}$,
defined by

\begin{equation}
\tilde{f}\left(y\right)=\begin{cases}
\prod_{i=1}^{n}\log\left(\frac{1}{2}\left(1-y_{i}+2py_{i}\right)\right) & f\left(y\right)\geq tn\\
-\infty & f\left(y\right)<tn.
\end{cases}\label{eq:the_sharpest_cutoff}
\end{equation}
All $y$ with $f\left(y\right)\geq tn$ are thus weighted according
to $\mu_{p}$, and all $y$ with $f\left(y\right)<tn$ have probability
$0$. Unfortunately, $\tilde{f}$ is not smooth enough in order to
be applicable for the existing large deviation frameworks. However,
it is possible to get approximations of $X_{n}^{\tilde{f}}$ by using
Hamiltonians which approximate $\tilde{f}$. Such a ``smooth-cutoff''
Hamiltonian should give a large mass to ``good'' vectors $y$ such
that $f\left(y\right)\geq tn$ and a small mass to ``bad'' vectors
$y$ such that $f\left(y\right)<tn$. Both \cite{2016_eldan_gaussian_width}
and \cite{chatterjee_dembo_2014} follow this approach in order to
tackle item (\ref{enu:intro_ld_probability}).

\subsection{Boolean functions}
\begin{defn}[Discrete gradient, Lipschitz constant]
	\label{def:discrete_gradient}Let $f:\mathcal C_{n}\to\bR$ be
	a real function on the Boolean hypercube. The derivative of $f$ at
	coordinate $i$ is defined as
	\[
	\partial_{i}f\left(y\right)=\frac{1}{2}\left(f\left(y_{1},\ldots,y_{i-1},1,y_{+1},\ldots y_{n}\right)-f\left(y_{1},\ldots,y_{i-1},-1,y_{+1},\ldots y_{n}\right)\right).
	\]
	With this we define both the the discrete gradient:
	\[
	\grad f\left(y\right)=\left(\partial_{1}f\left(y\right),\ldots,\partial_{n}f\left(y\right)\right),
	\]
	and the Lipschitz constant of $f$: 
	\[
	\text{Lip}\left(f\right)=\max_{i\in\left[n\right],y\in\left\{ -1,1\right\} ^{n}}\abs{\partial_{i}f\left(y\right)}.
	\]
\end{defn}

Every Boolean function $f:\mathcal C_{n}\to\bR$ has a unique Fourier decomposition
into monomials \cite{oDonnell2014}:
\[
f\left(x\right)=\sum_{S\subseteq\left[n\right]}\hat{f}\left(S\right)\prod_{i\in S}x_{i}.
\]
This defines an extension of $f$ from the discrete hypercube $\mathcal C_{n}$
into the continuous hypercube $\overline{\mathcal C_{n}}=\left[-1,1\right]^{n}$
by computing the value of the polynomial $\sum_{S\subseteq\left[n\right]}\hat{f}\left(S\right)\prod_{i\in S}x_{i}$
for $x\in\overline{\mathcal C_{n}}$. It can be shown that this is the
same extension as the harmonic extension defined in \cite[Section 3.1.1]{2016_eldan_gaussian_width}.
By Fact 14 in \cite{2016_eldan_gaussian_width}, the extension of
$\partial_{i}f$ agrees with the $i$-th partial derivative (in the
real-differentiable sense) of the extension of $f$. Throughout this
text, we will always assume that $f$, and therefore $\grad f$ as
well, are extended to $\overline{\mathcal C_{n}}$.
\begin{defn}[Gaussian width, gradient complexity]
	The \emph{Gaussian-width} of a set $K\subseteq\bR^{n}$ is defined
	as
	\[
	\boldsymbol{\mathrm{GW}}\left(K\right)=\e\left[\sup_{x\in K}\left\langle x,\Gamma\right\rangle \right]
	\]
	where $\Gamma\sim N\left(0,\textrm{Id}\right)$ is a standard Gaussian
	vector in $\bR^{n}$. For a function $f:\mathcal C_{n}\to\bR$, the \emph{gradient
		complexity} of $f$ is defined as
	\[
	\mathcal D\left(f\right)=\boldsymbol{\mathrm{GW}}\left(\left\{ \grad f\left(y\right):y\in\mathcal C_{n}\right\} \union\left\{ 0\right\} \right).
	\]
	For a measure $\nu$ on $\mathcal C_{n}$, by slight abuse of notation,
	we define its complexity as
	\[
	\mathcal D\left(\nu\right)=\mathcal D\left(\log\frac{d\nu}{d\mu}\right).
	\]
\end{defn}

\subsection{Mixture models}
\begin{defn}[$\rho$-mixtures]
	For $z\in\left[-1,1\right]^{n}$, denote by $X\left(z\right)$ the
	unique random vector in $\mathcal C_{n}$ whose coordinates are independent
	and whose expectation is $\e X\left(z\right)=z$. Let $\rho$ be a
	measure on $\left[-1,1\right]^{n}$. We define the random vector $X\left(\rho\right)$
	by
	\begin{equation}
	\pr{X\left(\rho\right)=x}=\int\pr{X\left(z\right)=x}d\rho\left(z\right).\label{eq:rho_mixture}
	\end{equation}
\end{defn}

\begin{defn}[Approximate mixture decomposition]
	Let $\delta>0$ and let $\rho$ be a measure on $\left[-1,1\right]^{n}$.
	A random variable $X$ is called a \emph{$\left(\rho,\delta\right)$-mixture
	}if there exists a coupling between $X\left(\rho\right)$ and $X$
	such that 
	\[
	\e\onenorm{X\left(\rho\right)-X}\leq\delta n.
	\]
\end{defn}

A result of \cite{2016_eldan_gaussian_width} roughly states that
low complexity Gibbs distributions are $\left(\rho,\delta\right)$-mixtures
for $\delta=o\left(1\right)$ and where $\rho$ is such that most
of the entropy comes from the individual $X\left(z\right)$ rather
than from the mixture.
\begin{defn}[Wasserstein distance]
	For two distributions $\nu_{1}$ and $\nu_{2}$, the \emph{Wasserstein
		mass-transportation distance}, denoted $W_{1}$, is defined as 
	\[
	W_{1}\left(\nu_{1},\nu_{2}\right)=\inf_{\underset{X\sim\nu_{1},Y\sim\nu_{2}}{\left(X,Y\right)\,s.t}}\frac{1}{2}\e\onenorm{X-Y},
	\]
	where the infimum is taken over all joint distributions whose marginals
	have the laws $\nu_{1}$ and $\nu_{2}$ respectively.
\end{defn}

\begin{defn}[Tilt of a distribution]
	For a vector $\theta\in\bR^{n}$, the \emph{tilt} $\tau_{\theta}\nu$
	of the distribution $\nu$ is a distribution defined by
\end{defn}

\[
\frac{d\left(\tau_{\theta}\nu\right)}{d\nu}\left(y\right)=\frac{e^{\left\langle \theta,y\right\rangle }}{\int_{\mathcal C_{n}}e^{\left\langle \theta,z\right\rangle }d\nu}.
\]

With the notion of $\rho$-mixture and tilt, we define what it means
for a random variable to break up into small tilts: 
\begin{defn}[Tilt decomposition]
	\label{def:tilt_decomposition}Let $\delta,\eps>0$ and let $\rho$
	be a measure on $\left[-1,1\right]^{n}$. A random variable $X$ with
	distribution $\nu$ is called a \emph{$\left(\rho,\delta,\eps\right)$-tilt-mixture}
	if there exists a probability measure $m$ on $\bR^{n}$ supported
	on $B\left(0,\eps\sqrt{n}\right)\intersect\left[-\frac{1}{4},\frac{1}{4}\right]^{n}$
	such that:
\end{defn}

\begin{enumerate}
	\item \label{enu:tilt_decomposition_1_decomposition}For every $\varphi:\mathcal C_{n}\to\bR$,
	\[
	\int_{\mathcal C_{n}}\varphi d\nu=\int_{\bR^{n}}\left(\int_{\mathcal C_{n}}\varphi d\left(\tau_{\theta}\nu\right)\right)dm\left(\theta\right).
	\]
	\item \label{enu:tilt_decomposition_2_high_prob}For all but a $\delta$-portion
	of the measure $m$, the tilt $\tau_{\theta}\nu$ is $\delta n$-close
	to a product measure in Wasserstein distance:
	\[
	m\left(\left\{ \theta\in\bR^{n}:\exists\xi\in\mathcal{PM}_{n}\,s.t\,W_{1}\left(\tau_{\theta}\nu,\xi\right)\leq\delta n\right\} \right)>1-\delta.
	\]
	\item \label{enu:tilt_decomposition_3_push_forward}The measure $\rho$
	is the push-forward of the measure $m$ under the map $\theta\mapsto\e_{X\sim\tau_{\theta}\nu}\left[X\right]$.
\end{enumerate}
\begin{fact}
	Every $\left(\rho,\delta,\eps\right)$-tilt-mixture is also a $\left(\rho,4\delta\right)$-mixture.
\end{fact}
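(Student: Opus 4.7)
The plan is to construct an explicit coupling of $X\sim\nu$ with $X(\rho)$ via a two-step gluing: first sample $\theta\sim m$, then use the witnessing product measure $\xi_\theta$ of condition (\ref{enu:tilt_decomposition_2_high_prob}) as a bridge between $\tau_\theta\nu$ and $X(z(\theta))$, where $z(\theta):=\e_{Y\sim\tau_\theta\nu}Y$. Conditions (\ref{enu:tilt_decomposition_1_decomposition}) and (\ref{enu:tilt_decomposition_3_push_forward}) of Definition \ref{def:tilt_decomposition} guarantee that integrating the first marginal over $\theta\sim m$ reproduces $\nu$ exactly, while integrating the second marginal reproduces $X(\rho)$ exactly (the latter via the push-forward identity $X(\rho)=\int X(z(\theta))\,dm(\theta)$). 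Hence the problem reduces to the pointwise estimate $\e\onenorm{Y_\theta-X(z(\theta))}\lesssim\delta n$ for most $\theta$, then integration.

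For a $\theta$ in the good set of condition (\ref{enu:tilt_decomposition_2_high_prob}), choose a product measure $\xi_\theta\in\mathcal{PM}_{n}$ with $W_{1}(\tau_\theta\nu,\xi_\theta)\leq\delta n$, and denote by $w(\theta)\in[-1,1]^n$ its vector of marginal means, so $\xi_\theta$ coincides with the law of $X(w(\theta))$. Fix an optimal coupling $(Y_\theta,Z_\theta)$; by definition of $W_1$, one has $\e\onenorm{Y_\theta-Z_\theta}\leq 2\delta n$, and Jensen's inequality applied coordinatewise gives
\[
\onenorm{z(\theta)-w(\theta)}\;=\;\sum_{i=1}^{n}\left|\e(Y_{\theta,i}-Z_{\theta,i})\right|\;\leq\;\e\onenorm{Y_\theta-Z_\theta}\;\leq\;2\delta n.
\]
Next, for any $w,z\in[-1,1]^{n}$, the product measures $X(w)$ and $X(z)$ admit a coordinatewise optimal coupling with $\e\onenorm{X(w)-X(z)}=\onenorm{w-z}$; this reduces to the elementary fact that two $\pm 1$-valued Bernoullis with means $a,b$ can be coupled with expected absolute difference $|a-b|$. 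Gluing the two couplings along $\xi_\theta$ (via conditioning on $Z_\theta$) and applying the triangle inequality in expected $\ell^1$ yields, for each good $\theta$, a coupling of $\tau_\theta\nu$ with $X(z(\theta))$ whose expected $\ell^{1}$-distance is at most $4\delta n$.

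For the remaining, at most $\delta$-fraction of $\theta$, any coupling gives the trivial bound $\onenorm{Y_\theta-X(z(\theta))}\leq 2n$, since both vectors lie in $\{-1,1\}^n$. Integrating over $\theta\sim m$ delivers a global coupling of $X$ and $X(\rho)$ with $\e\onenorm{X-X(\rho)}$ of order $\delta n$, which is the required conclusion (up to the precise constant, which comes from slightly tighter bookkeeping of the two uses of the $1/2$ factor in the definition of $W_1$). The genuinely nontrivial step is entirely bookkeeping: the gluing of couplings and the coordinatewise coupling of Bernoulli product measures are both elementary. The $\eps$-radius and $[-1/4,1/4]^n$ restrictions on $m$ play no role here — they will matter only in later applications where one wants to control the entropy of $\rho$ relative to the product factors.
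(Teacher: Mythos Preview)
Your argument is correct and follows the same integrate-over-$\theta$ strategy as the paper's proof. In fact you are more careful than the paper on one point: the paper writes $W_{1}(\nu,\sigma)\leq\int W_{1}(\xi_{\theta},\tau_{\theta}\nu)\,dm(\theta)$ as though the witnessing product measure $\xi_{\theta}$ from item~(\ref{enu:tilt_decomposition_2_high_prob}) already coincided with the law of $X(z(\theta))$, whereas you correctly insert the bridge step bounding $\onenorm{w(\theta)-z(\theta)}$ via Jensen and then the coordinatewise coupling of the two product measures. The price is that your bookkeeping gives $4\delta n$ on the good set and $2\delta n$ on the bad set, hence a $(\rho,6\delta)$-mixture; your parenthetical about recovering the constant $4\delta$ by tracking the factor $\tfrac12$ in the definition of $W_{1}$ is optimistic---the extra $2\delta n$ comes from the bridge step itself, not from the $W_{1}$ normalization. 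The paper's constant $4\delta$ effectively relies on the witness in item~(\ref{enu:tilt_decomposition_2_high_prob}) being $X(z(\theta))$ specifically, which is indeed how it is produced in the applications via Proposition~\ref{prop:eldan_prop_17} but is not what the abstract definition says. This is a cosmetic discrepancy only; your proof is sound.
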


\begin{proof}
	Define $\Theta=\left\{ \theta\in\bR^{n}:\exists\xi\in\mathcal{PM}_{n}\,s.t\,W_{1}\left(\tau_{\theta}\nu,\xi\right)\leq\delta n\right\} $,
	and denote the distribution of $X$ and of $X\left(\rho\right)$ by
	$\nu$ and $\sigma$ respectively. Using item \ref{enu:tilt_decomposition_1_decomposition}
	in the definition of a tilt-mixture, we have
	
	\begin{align*}
	W_{1}\left(\nu,\sigma\right) & \leq\int_{\bR^{n}}W_{1}\left(\xi_{\theta},\tau_{\theta}\nu\right)dm\left(\theta\right)\\
	& \leq\int_{\Theta}W_{1}\left(\xi_{\theta},\tau_{\theta}\nu\right)dm\left(\theta\right)+m\left(\left[-1/4,1/4\right]^{n}\backslash\Theta\right)n.
	\end{align*}
	By item \ref{enu:tilt_decomposition_2_high_prob} in the definition
	of a tilt-mixture, there exists a coupling between $X$ and $X\left(\rho\right)$
	such that each term on the right hand side is bounded by $\delta n$.
	This gives a $4\delta$ bound on the expectation $\e\onenorm{X-X\left(\rho\right)}$.
\end{proof}
A tilt-mixture decomposition provides more information than general$\rho$-mixtures:
It tells us something about the structure of the elements of the mixture,
with the parameter $\eps$ in Definition \ref{def:tilt_decomposition}
bounding the support of the tilts to a ball of radius $\eps\sqrt{n}$.
Some of our results will rely on the existence of tilt decompositions
with small $\eps$.

\section{Results}

Our main technical contribution is a characterization of the measure
$\rho$ described above: With high probability with respect to $\rho$,
the vector $z$ in equation (\ref{eq:rho_mixture}) is nearly a critical
point of a certain functional associated with $f$.
\begin{thm}[Main Structural Theorem]
	\label{thm:main_theorem}Let $n>0$, let $f:\mathcal C_{n}\to\bR$ be
	a function and denote 
	\begin{align}
	D & =\mathcal D\left(f\right)\label{eq:bound_on_D}\\
	L_{1} & =\max\left\{ 1,\text{Lip}\left(f\right)\right\} \label{eq:bound_on_lipschitz}\\
	L_{2} & =\max\left\{ 1,\max_{x\neq y\in\mathcal C_{n}}\frac{\onenorm{\grad f\left(x\right)-\grad f\left(y\right)}}{\onenorm{x-y}}\right\} .\label{eq:bound_on_grad_lipschitz}
	\end{align}
	Denote by $\mathcal X_{f}$ the set 
	\begin{equation}
	\mathcal X_{f}=\left\{ X\in\overline{\mathcal C_{n}}:\onenorm{X-\tanh\left(\grad f\left(X\right)\right)}\leq5000L_{1}L_{2}^{3/4}D^{1/4}n^{3/4}\right\} ,\label{eq:main_theorem_inequality}
	\end{equation}
	where $\grad f\left(X\right)$ is calculated by harmonically extending
	$\grad f$ to $\overline{\mathcal C_{n}}$, and with the $\tanh$ applied
	entrywise to the entries of $\grad f\left(X\right)$. Then $X_{n}^{f}$
	is a $\left(\rho,\frac{3D^{1/4}}{n^{1/4}},L_{2}^{3/4}\frac{D^{1/4}}{n^{1/4}}\right)$-tilt-mixture
	such that 
	\begin{equation}
	\rho\left(\mathcal X_{f}\right)\geq1-\frac{3D^{1/4}}{n^{1/4}}.\label{eq:main_theorem_probability_statement}
	\end{equation}
	In particular, if $D=o\left(n\right)$, then $X_{n}^{f}$ is a $\left(\rho,o\left(1\right)\right)$-mixture
	with $\rho\left(\mathcal X_{f}\right)=1-o\left(1\right)$.
\end{thm}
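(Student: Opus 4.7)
The plan is to build on the tilt-mixture decomposition of \cite{2016_eldan_gaussian_width}, which already furnishes items (\ref{enu:tilt_decomposition_1_decomposition})--(\ref{enu:tilt_decomposition_3_push_forward}) of Definition \ref{def:tilt_decomposition} with parameters $\delta = 3D^{1/4}/n^{1/4}$ and $\varepsilon = L_2^{3/4} D^{1/4}/n^{1/4}$. What remains is to establish (\ref{eq:main_theorem_probability_statement}): with $\rho$-probability at least $1-\delta$, the mean $z$ of the product measure $\xi_\theta$ close to $\tau_\theta\nu$ lies in $\mathcal X_f$. Item (\ref{enu:tilt_decomposition_2_high_prob}) reduces this to proving the approximate mean-field equation $\onenorm{z - \tanh(\grad f(z))} = O(L_1 L_2^{3/4} D^{1/4} n^{3/4})$ for every $\theta$ in the ``good'' set supporting most of the mass of $m$.

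The first step is a quenched version of the mean-field equation, obtained by conditioning. For $Y \sim \tau_\theta\nu$ and any coordinate $i$, since the harmonic extension of $\partial_i f$ depends only on $Y_{-i}$, the conditional law of $Y_i$ given $Y_{-i}$ is Bernoulli on $\{-1,1\}$ with mean $\tanh(\partial_i f(Y) + \theta_i)$. Taking outer expectations and summing the resulting $\ell^1$ errors over $i$, and using that $\tanh$ is $1$-Lipschitz,
\[
\onenorm{z^* - \tanh(\grad f(z^*) + \theta)} \;\leq\; \e_{\tau_\theta\nu}\onenorm{\grad f(Y) - \grad f(z^*)}, \qquad z^* := \e_{\tau_\theta\nu}[Y].
\]

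The second step, and the main technical obstacle, is to bound the right-hand side above. The plan is first to transfer the expectation to the nearby product measure $\xi_\theta$ using $W_1$-closeness (at a cost scaling with the Lipschitz constant $L_2$ of $\grad f$), and then to use that under a product measure with mean $z$ one has $\e_{\xi_\theta}[\grad f(Y)] = \grad f(z)$, thanks to multilinearity of the harmonic extension of $\grad f$. What remains is then the concentration of $\grad f(Y)$ around $\grad f(z)$ under $\xi_\theta$. A purely Efron--Stein bounded-differences estimate only yields $L_2 n$, which is too weak; the target exponents $L_2^{3/4} D^{1/4} n^{3/4}$ will instead emerge by interpolating between the local Lipschitz bound $L_2$ and the global control of the image of $\grad f$ provided by the gradient complexity $D$, either through a chaining argument on $\{\grad f(y): y \in \mathcal C_n\}$ or, in line with the stochastic localization used in \cite{2016_eldan_gaussian_width}, via the accumulated variance of the localization process, which is bounded by $D$.

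Once this fluctuation bound is in hand, the remaining errors are straightforward. The support condition on $m$ gives $\twonorm\theta \leq \varepsilon\sqrt n$, hence by Cauchy--Schwarz $\onenorm\theta \leq L_2^{3/4} D^{1/4} n^{3/4}$, and $1$-Lipschitz continuity of $\tanh$ yields $\onenorm{\tanh(\grad f(z^*)+\theta) - \tanh(\grad f(z^*))} \leq \onenorm\theta$. The Wasserstein proximity of $\tau_\theta\nu$ to $\xi_\theta$ bounds $\onenorm{z^* - z} \leq 2\delta n = 6 D^{1/4} n^{3/4}$, while the $L_2$-Lipschitz property of $\grad f$ controls the resulting gap $\onenorm{\tanh(\grad f(z^*)) - \tanh(\grad f(z))}$. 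Combining the three contributions via the triangle inequality, with the factor $L_1$ arising in the passage from Wasserstein distances to integrals of test functions depending on $\grad f$, yields (\ref{eq:main_theorem_inequality}) with the numerical constant $5000$ absorbing all overheads.
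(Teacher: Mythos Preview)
Your outline has a genuine gap at Step~2, and the mechanism you sketch for closing it is not the one that actually works.

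You correctly reduce to bounding $\e_{\tau_\theta\nu}\onenorm{\grad f(Y)-\grad f(z^*)}$ (or, after transfer, $\e_{\xi_\theta}\onenorm{\grad f(Y)-\grad f(z)}$), and you recognise this as the crux. But you then propose to obtain the target exponent $L_2^{3/4}D^{1/4}n^{3/4}$ by ``interpolating'' via chaining or by redoing the stochastic localization. Neither is how the bound arises. The relevant output of \cite{2016_eldan_gaussian_width} is not just the decomposition (items (\ref{enu:tilt_decomposition_1_decomposition})--(\ref{enu:tilt_decomposition_3_push_forward})); Proposition~\ref{prop:eldan_prop_18} also delivers, on the good set $\Theta$, the trace bound
\[
\tr\bigl(\mathcal H(\tau_\theta\nu)\bigr)\;\le\;256\,\frac{n^{1/3}D^{2/3}}{\eps^{2/3}},
\]
and $\tr\mathcal H(\tau_\theta\nu)$ is, up to a constant coming from Lemma~\ref{lem:fix_that_theta}, exactly the $\ell^2$-variance of $\tanh(\grad f(X))$ under $\tau_\theta\nu$. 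Cauchy--Schwarz then gives $\e\onenorm{\tanh(\grad f(X))-\e\tanh(\grad f(X))}\le 32\,n^{2/3}D^{1/3}/\eps^{1/3}$, and the same trace bound feeds Proposition~\ref{prop:eldan_prop_17} to transfer to $Y\sim\xi_\theta$ at cost $L_2$ times $W_1$. The dependence on $D$ thus enters through this already-available variance control of the \emph{$\tanh$-transformed} gradient, not through any new chaining argument on $\{\grad f(y)\}$.

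Because the controlled quantity is $\e\tanh(\grad f(Y))$ rather than $\tanh(\grad f(\e Y))$, one more step is needed: a swap lemma showing $\abs{\tanh(\e Z)-\e\tanh Z}\le 20L_1\,\e\abs{\tanh Z-\e\tanh Z}$ for bounded $Z$ (Lemma~\ref{lem:variance_of_tanh}), combined with multilinearity $\e\grad f(Y)=\grad f(\e Y)$ under a product law. This is where the factor $L_1$ actually enters --- not, as you write, in ``the passage from Wasserstein distances to integrals of test functions''. Finally, the particular choice $\eps=L_2^{3/4}D^{1/4}/n^{1/4}$ is an optimization that balances the two error terms $L_2 n^{2/3}D^{1/3}/\eps^{1/3}$ and $\eps n$; it is not handed to you by \cite{2016_eldan_gaussian_width}. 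Your route of stripping the $\tanh$ and bounding raw fluctuations of $\grad f$ would, even if completed, cost an extra factor like $\cosh^2(L_1)$ to invert the $\tanh$, worse than the linear $L_1$ in the statement.
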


In other words, almost all the mass of the mixture resides on random
vectors $X$ which almost satisfy the fixed point equation

\begin{equation}
X=\tanh\left(\grad f\left(X\right)\right).\label{eq:fixed_point_equation}
\end{equation}
\begin{rem}
	One can check that the solutions of the fixed point equation are exactly
	the critical points of the functional $f\left(X\right)+H\left(X\right)$
	where $H\left(X\right)=\sum_{i<,j}X_{ij}\log X_{ij}+\left(1-X_{ij}\right)\log\left(1-X_{ij}\right)$
	is the entropy of $X$. This is a variant of the functional that arises
	in the variational problem in \cite{chatterjee_diaconis_2013}.
\end{rem}

\begin{rem}
	The following is an example application of Theorem \ref{thm:main_theorem}
	to Ising models, to be compared with the main result of \cite{BasakMukherjee2017}.
\end{rem}

\begin{cor}[Ising models]
	\label{cor:general_ising}Let $f$ be an Ising model Hamiltonian
	as described in Section \ref{subsec:ising_intro_section}, with interaction
	matrix $A\in\bR^{n\times n}$ and a magnetic moment vector $\mu\in\bR^{n}$.
	Denote
	\[
	\mathcal X_{f}=\left\{ X\in\overline{\mathcal C_{n}}:\onenorm{X-\tanh\left(AX+\mu\right)}\leq5000L_{1}L_{2}^{3/4}D^{1/4}n^{3/4}\right\} ,
	\]
	where 
	\begin{align*}
	D & =\sqrt{n\tr A^{2}}+\sqrt{n}\mu_{\max}\\
	L_{1} & =\max\left\{ 1,\mu_{\max}+\max_{i\in\left[n\right]}\sum_{j\in\left[n\right]}\abs{A_{ij}}\right\} \\
	L_{2} & =\max\left\{ 1,\max_{i\in\left[n\right]}\sum_{j\in\left[n\right]}\abs{A_{ij}}\right\} .
	\end{align*}
	Then $X_{n}^{f}$ is a $\left(\rho,\frac{3D^{1/4}}{n^{1/4}},L_{2}^{3/4}\frac{D^{1/4}}{n^{1/4}}\right)$-tilt-mixture
	such that 
	\[
	\rho\left(\mathcal X_{f}\right)\geq1-\frac{3D^{1/4}}{n^{1/4}}.
	\]
	In particular, if $L_{1}=O\left(1\right)$ and $\tr\left(A^{2}\right)=o\left(n\right)$
	(the ``mean-field assumption''), then $\mathcal X_{f}$ is $\left(\rho,o\left(1\right)\right)$-mixture
	with $\rho\left(\mathcal X_{f}\right)=1-o\left(1\right)$.
\end{cor}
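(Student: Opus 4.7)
The plan is to specialize Theorem \ref{thm:main_theorem} to the Ising Hamiltonian $f(x) = \langle x, Ax\rangle + \langle \mu, x\rangle$, bounding the three parameters $\mathcal D(f)$, $\text{Lip}(f)$, and the $\ell^{1}$-to-$\ell^{1}$ Lipschitz constant of $\grad f$ by the expressions $D$, $L_{1}$, $L_{2}$ stated in the corollary, absorbing the remaining absolute constants into the factor $5000$ in the definition of $\mathcal X_{f}$ and into the error parameters of the theorem.

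First I would compute the discrete gradient. Since $f$ is already a quadratic Fourier polynomial and $A$ has zero diagonal, its harmonic extension to $\overline{\mathcal C_{n}}$ coincides with $f$ itself, and Definition \ref{def:discrete_gradient} yields $\grad f(y) = 2Ay + \mu$; the factor of $2$ is harmless and is absorbed into absolute constants. The two Lipschitz bounds are then immediate: for every $i$ and $y$,
\[
|\partial_{i} f(y)| = |2(Ay)_{i} + \mu_{i}| \leq 2\max_{i}\sum_{j}|A_{ij}| + \mu_{\max},
\]
and for $x,y \in \mathcal C_{n}$,
\[
\onenorm{\grad f(x) - \grad f(y)} = 2\onenorm{A(x-y)} \leq 2\Bigl(\max_{j}\sum_{i}|A_{ij}|\Bigr)\onenorm{x-y},
\]
where the $\ell^{1}$-to-$\ell^{1}$ operator norm equals the maximum column sum, coinciding with the maximum row sum by symmetry of $A$.

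The main step is the Gaussian width estimate. Let $\Gamma \sim N(0,\mathrm{Id})$. Since a linear functional on $\mathcal C_{n}$ is optimized at a signed vertex,
\[
\sup_{y \in \mathcal C_{n}}\langle 2Ay + \mu,\Gamma\rangle = 2\onenorm{A\Gamma} + \langle\mu,\Gamma\rangle,
\]
and including the point $0$ only replaces this quantity by its positive part, whose expectation is bounded by $2\,\e\onenorm{A\Gamma} + \e|\langle\mu,\Gamma\rangle|$. Each coordinate $(A\Gamma)_{i}$ is Gaussian with variance $\sum_{j}A_{ij}^{2}$, so Cauchy--Schwarz over the $n$ rows gives $\e\onenorm{A\Gamma} \leq \sqrt{2/\pi}\sqrt{n\tr A^{2}}$, while $\e|\langle\mu,\Gamma\rangle| = \sqrt{2/\pi}\,\twonorm{\mu} \leq \sqrt{2/\pi}\sqrt{n}\,\mu_{\max}$. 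Together they give $\mathcal D(f) \leq C(\sqrt{n\tr A^{2}} + \sqrt{n}\,\mu_{\max})$, matching the corollary's $D$ up to an absolute constant.

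Plugging these bounds into Theorem \ref{thm:main_theorem} immediately produces the tilt-mixture decomposition with the stated error parameters and gives $\rho(\mathcal X_{f}) \geq 1 - 3D^{1/4}/n^{1/4}$, where the fixed-point inequality $\onenorm{X-\tanh(\grad f(X))}$ becomes the one in $\mathcal X_{f}$ after absorbing the factor of $2$. The ``in particular'' clause then follows because $L_{1} = O(1)$ and $\tr(A^{2}) = o(n)$ force $D = o(n)$, sending both $3D^{1/4}/n^{1/4}$ and $L_{2}^{3/4}D^{1/4}/n^{1/4}$ to zero. I do not expect any real obstacle: the Lipschitz bounds are elementary operator-norm inequalities, and the Gaussian width reduces to the standard computation of $\e\onenorm{A\Gamma}$.
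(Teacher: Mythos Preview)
Your plan is correct and matches the paper's proof: compute $\grad f$, bound the three parameters $\mathcal D(f)$, $\text{Lip}(f)$, and the $\ell^{1}$-Lipschitz constant of $\grad f$ by the stated $D$, $L_{1}$, $L_{2}$, and invoke Theorem~\ref{thm:main_theorem}. The only cosmetic difference is that for the Gaussian width the paper passes through $\mathcal C_{n}\subset\sqrt{n}\,B(0,1)$ and bounds $\sqrt{n}\,\e\twonorm{A\Gamma}\le\sqrt{n\tr A^{2}}$, whereas you compute $\e\onenorm{A\Gamma}$ directly via Cauchy--Schwarz over the rows; both routes give the same bound.
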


The simplest example of an Ising model is the Curie-Weiss ferromagnet,
for which we can use our framework as a toy example and rederive well-known
properties about its distribution.
\begin{cor}
	\label{cor:curie_wiess_ising}Let $\beta>0$ and let $f:\mathcal C_{n}\to\bR$
	be the Curie-Weiss Hamiltonian, $f\left(x\right)=\frac{\beta}{n}\sum_{i\neq j}x_{i}x_{j}.$
	Denote
	\[
	\mathcal X_{f}=\left\{ X\in\overline{\mathcal C_{n}}:\onenorm{X-\tanh\left(\frac{\beta\boldsymbol{J}}{n}X\right)}\leq5001\left(1+\beta\right)^{2}n^{7/8}\right\} ,
	\]
	where $\boldsymbol{J}$ is the $n\times n$ all-$1$ matrix. Then
	$X_{n}^{f}$ is a $\left(\rho,3n^{-1/8},3n^{-1/8}\right)$-tilt-mixture,
	and $\rho\left(\mathcal X_{f}\right)\geq1-3n^{-1/8}$. Further, if $\beta<1$,
	then every $X\in\mathcal X_{f}$ satisfies 
	\[
	\onenorm X\leq5001\frac{\left(1+\beta\right)^{2}}{1-\beta}n^{7/8}.
	\]
\end{cor}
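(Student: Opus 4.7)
The plan is to specialize Corollary~\ref{cor:general_ising} to the Curie--Weiss case and then derive the small-$\onenorm{X}$ bound by a short contraction argument. The Curie--Weiss Hamiltonian fits the Ising template with $A=\frac{\beta}{n}(\boldsymbol{J}-I)$ and $\mu=0$; the relevant Ising parameters evaluate to $\tr A^{2}=\frac{(n-1)\beta^{2}}{n}\le\beta^{2}$, hence $D\le\beta\sqrt{n}$, and $\max_{i}\sum_{j}\abs{A_{ij}}=\frac{(n-1)\beta}{n}\le\beta$, hence $L_{1}=L_{2}\le 1+\beta$. Substituting into Corollary~\ref{cor:general_ising} yields $5000\,L_{1}L_{2}^{3/4}D^{1/4}n^{3/4}\le 5000(1+\beta)^{7/4}\beta^{1/4}n^{7/8}\le 5000(1+\beta)^{2}n^{7/8}$, and the two tilt-mixture parameters $\tfrac{3D^{1/4}}{n^{1/4}}$ and $L_{2}^{3/4}\tfrac{D^{1/4}}{n^{1/4}}$ are both of order $n^{-1/8}$ after absorbing mild $\beta$-factors into the constant~$3$.

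A small cosmetic discrepancy remains: the set produced by Corollary~\ref{cor:general_ising} is defined using $\tanh(AX)$, whereas the statement uses $\tanh(\tfrac{\beta}{n}\boldsymbol{J}X)$. The two arguments differ by $\tfrac{\beta}{n}X$, whose $\ell_{1}$ norm is at most $\beta$, and since $\tanh$ is $1$-Lipschitz coordinate-wise the shift can be absorbed by bumping the constant $5000$ up to $5001$, yielding the set $\mathcal X_{f}$ exactly as written.

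For the subcritical bound, I exploit that $\tfrac{\beta}{n}\boldsymbol{J}X=\beta\bar X\cdot\one$ is a constant vector, where $\bar X=\tfrac{1}{n}\sum_{i}X_{i}$. Hence $\tanh(\tfrac{\beta}{n}\boldsymbol{J}X)=\tanh(\beta\bar X)\cdot\one$, and using $\abs{\tanh y}\le\abs y$,
\[
\onenorm{\tanh(\tfrac{\beta}{n}\boldsymbol{J}X)}=n\,\abs{\tanh(\beta\bar X)}\le n\beta\abs{\bar X}=\beta\,\bigl|\textstyle\sum_{i}X_{i}\bigr|\le\beta\,\onenorm{X}.
\]
Combining this with the defining inequality of $\mathcal X_{f}$ and the triangle inequality gives $\onenorm{X}\le 5001(1+\beta)^{2}n^{7/8}+\beta\onenorm{X}$; rearranging (valid since $\beta<1$) yields the claimed bound. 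The main ``obstacle'' is therefore purely bookkeeping---keeping track of the constants and the harmless discrepancies between $A$, $\grad f$, and $\tfrac{\beta}{n}\boldsymbol{J}$---while the genuine conceptual content is the single-line contraction at the end.
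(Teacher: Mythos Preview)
Your proposal is correct and follows essentially the same route as the paper: specialize Corollary~\ref{cor:general_ising} with $A=\tfrac{\beta}{n}(\boldsymbol{J}-I)$, compute $D\le\beta\sqrt{n}$ and $L_1,L_2\le 1+\beta$, absorb the $\tfrac{\beta}{n}X$ discrepancy between $AX$ and $\tfrac{\beta}{n}\boldsymbol{J}X$ via the $1$-Lipschitz property of $\tanh$ (bumping $5000$ to $5001$), and close with the contraction $\bigl\|\tanh(\tfrac{\beta}{n}\boldsymbol{J}X)\bigr\|_1\le\beta\,\|X\|_1$ followed by rearrangement. Your remark about ``absorbing mild $\beta$-factors into the constant~$3$'' for the tilt-mixture parameters mirrors the paper's own informal handling of that point.
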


For a more detailed application of Theorem \ref{thm:main_theorem}
for the case of exponential random graphs, see \cite{eldan_gross_exponential_random_graphs}.

The following theorem finds sufficient conditions under which composing
$f$ with a real-valued function produces a Hamiltonian with a$\rho$-mixture
approximation:
\begin{thm}[Composition Theorem]
	\label{thm:composition_theorem}Let $h:\bR \to \bR$ be a twice differentiable
	function satisfying
	
	\begin{align*}
	\abs{h'\left(x\right)} & <B_{1}\,\,\,\,\,\,\,\all x\in\bR\\
	\abs{h''\left(x\right)} & <B_{2}\,\,\,\,\,\,\,\all x\in\bR.
	\end{align*}
	Let $f:\mathcal C_{n}\to\bR$ be a function with parameters $D$, $L_{1}$,
	and $L_{2}$ as described in Theorem \ref{thm:main_theorem}. Denote
	by $\tilde{D}$, $\tilde{L}_{1}$, $\tilde{L}_{2}$ and $\tilde{L}_{3}$
	the real numbers 
	\begin{align*}
	\tilde{D} & =B_{1}D+B_{2}L_{1}^{2}n\\
	\tilde{L}_{1} & =\max\left\{ 1,B_{1}L_{1}\right\} \\
	\tilde{L}_{2} & =\max\left\{ 1,B_{1}L_{2}+3B_{2}L_{1}^{2}n\right\} \\
	\tilde{L}_{3} & =2B_{2}L_{1}^{2}n^{3/2}
	\end{align*}
	and denote by $\mathcal{\tilde{X}}_{h\circ f}$ the set 
	\begin{equation}
	\mathcal{\tilde{X}}_{h\circ f}=\left\{ X\in\overline{\mathcal C_{n}}:\onenorm{X-\tanh\left(h'\left(f\left(X\right)\right)\grad f\left(X\right)\right)}\leq5000\tilde{L}_{1}\tilde{L}_{2}^{3/4}\tilde{D}^{1/4}n^{3/4}+\tilde{L}_{3}\right\} ,\label{eq:composition_theorem_inequality}
	\end{equation}
	where $\grad f\left(X\right)$ is calculated by harmonically extending
	$\grad f$ to $\overline{\mathcal C_{n}}$, and with the $\tanh$ applied
	entrywise to the entries of $\grad f\left(X\right)$. Then $X_{n}^{h\circ f}$
	is a $\left(\rho,\frac{3\tilde{D}^{1/4}}{n^{1/4}},\tilde{L}_{2}^{3/4}\frac{\tilde{D}^{1/4}}{n^{1/4}}\right)$-tilt-mixture
	such that 
	\[
	\rho\left(\mathcal X_{h\circ f}\right)\geq1-\frac{3\tilde{D}^{1/4}}{n^{1/4}}.
	\]
\end{thm}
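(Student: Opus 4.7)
The natural approach is to apply Theorem \ref{thm:main_theorem} directly to the composed Hamiltonian $g := h\circ f$, and then translate the resulting fixed-point condition $X = \tanh(\grad g(X))$ into the stated condition involving $h'(f(X))\grad f(X)$. The pointwise input is the one-variable mean value theorem applied to $h$: for every vertex $y \in \mathcal C_n$,
\[
\partial_i g(y) = h'(\xi_{y,i}) \cdot \partial_i f(y)
\]
for some $\xi_{y,i}$ lying between the values of $f$ at the two vertices differing from $y$ only in coordinate $i$. Since $|\xi_{y,i} - f(y)| \leq 2 L_1$ and $|h''| \leq B_2$, this gives the comparison
\[
|\partial_i g(y) - h'(f(y))\,\partial_i f(y)| \leq 2 B_2 L_1^2.
\]

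From this comparison I read off the three complexity parameters of $g$. The bound $\text{Lip}(g) \leq B_1 L_1 = \tilde L_1$ is immediate. For the Lipschitz constant of $\grad g$, I bound $\|\grad g(x) - \grad g(y)\|_1$ by $\|h'(f(x))\grad f(x) - h'(f(y))\grad f(y)\|_1 + 4 B_2 L_1^2 n$, add-and-subtract $h'(f(x))\grad f(y)$, and estimate the two resulting summands by $B_1 L_2 \|x-y\|_1$ and $B_2|f(x)-f(y)|\cdot L_1 n \leq B_2 L_1^2 n\,\|x-y\|_1$; the residual $4 B_2 L_1^2 n$ absorbs into $3 B_2 L_1^2 n \|x-y\|_1$ because $\|x-y\|_1 \geq 2$ at distinct vertices, yielding $\tilde L_2$. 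For the gradient complexity I write $\grad g(y) = h'(f(y))\grad f(y) + E(y)$ with $\|E(y)\|_2 \leq 2 B_2 L_1^2 \sqrt n$, and split the Gaussian width as a scaled copy of $\mathcal D(f)$ (using $h'(f(y))\langle \grad f(y),\Gamma\rangle \leq B_1 |\langle \grad f(y),\Gamma\rangle|$) plus a term of order $B_2 L_1^2\sqrt n\cdot\mathbb E\|\Gamma\|_2 \leq B_2 L_1^2 n$, matching $\tilde D$ up to absolute constants.

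With $\mathcal D(g),\text{Lip}(g),$ and $\text{Lip}(\grad g)$ controlled, Theorem \ref{thm:main_theorem} applied to $g$ immediately produces a tilt-mixture with the claimed parameters, supported (up to the stated loss) on the set $\mathcal X_g = \{X : \|X - \tanh(\grad g(X))\|_1 \leq 5000\,\tilde L_1 \tilde L_2^{3/4}\tilde D^{1/4} n^{3/4}\}$. The remaining task is to replace $\grad g(X)$ by $h'(f(X))\grad f(X)$ inside the fixed-point condition; by the entrywise $1$-Lipschitzness of $\tanh$, it suffices to show that for every $X \in \overline{\mathcal C_n}$,
\[
\|\grad g(X) - h'(f(X))\grad f(X)\|_1 \leq \tilde L_3.
\]

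For this final step I use that the harmonic (multilinear) extension of a function equals its expectation under the product measure $Y$ on $\mathcal C_n$ with marginal expectations $X$: so $\grad g(X) = \mathbb E[\grad g(Y)]$, $\grad f(X) = \mathbb E[\grad f(Y)]$, and $f(X) = \mathbb E[f(Y)]$. Splitting
\[
\grad g(X) - h'(f(X))\grad f(X) = \mathbb E\bigl[\grad g(Y) - h'(f(Y))\grad f(Y)\bigr] + \mathbb E\bigl[(h'(f(Y)) - h'(\mathbb E f(Y)))\grad f(Y)\bigr],
\]
the first expectation is controlled coordinatewise by the vertex comparison above and contributes at most $2 B_2 L_1^2 n$ to the $\ell_1$ norm; the $i$-th entry of the second is bounded by $B_2 L_1 \sqrt{\text{Var}(f(Y))}$, and Efron-Stein under any product measure gives $\text{Var}(f(Y)) \leq n L_1^2$, so the total $\ell_1$ contribution is at most $B_2 L_1^2 n^{3/2}$; together these are dominated by $\tilde L_3 = 2 B_2 L_1^2 n^{3/2}$. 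The main obstacle is precisely this continuous-cube comparison: on the interior of $\overline{\mathcal C_n}$, the harmonically-extended $\grad(h\circ f)$ and the nonlinear expression $h'(f)\grad f$ do not coincide even up to the per-coordinate vertex error, and it is the Jensen-type correction — not the vertex error — that dictates the order $n^{3/2}$ appearing in $\tilde L_3$. The other ingredients (Lipschitz bounds and the Gaussian width decomposition) are routine once the pointwise comparison is in hand.
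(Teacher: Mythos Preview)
Your argument is correct and matches the paper's approach: apply Theorem \ref{thm:main_theorem} to $g = h\circ f$ after bounding its three parameters (packaged in the paper as Lemma \ref{lem:composition_parameters}), then translate $\grad g(X)$ into $h'(f(X))\grad f(X)$ on the continuous cube via the product-measure representation and an Efron--Stein/variance bound (the paper's Lemma \ref{lem:chain_rule} together with Proposition \ref{prop:variance_of_f}). The only cosmetic differences are your use of the mean value theorem in place of Taylor's Lagrange remainder for the vertex comparison, and your hedge ``up to absolute constants'' on the Gaussian-width term where the paper's lemma records the exact value $\tilde D$.
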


\begin{rem}
	\label{rem:a_slight_difference}Theorem \ref{thm:composition_theorem}
	bounds the norm $\left\Vert X-\tanh\left(h'\left(f\left(X\right)\right)\grad f\left(X\right)\right)\right\Vert _{1}$
	rather than $\left\Vert X-\tanh\left(\grad\left(h\circ f\right)\left(X\right)\right)\right\Vert _{1}$
	(which is the analogue of the quantity arising in the main Theorem
	\ref{thm:main_theorem}). This is a matter of practicality: For many
	known Hamiltonians $f$ it is easy to compute $\grad f$ and its extension
	to $\overline{\mathcal C_{n}}$, but it is not straightforward to compute
	$\grad\left(h\circ f\right)\left(X\right)$ and its extension to $\overline{\mathcal C_{n}}$
	for arbitrary $h$. In these cases, calculating $h'\left(f\left(X\right)\right)\grad f\left(X\right)$
	is a much simpler task. Further, as will be shown in Lemma \ref{lem:chain_rule},
	the two quantities $h'\left(f\left(X\right)\right)\grad f\left(X\right)$
	and $\grad\left(h\circ f\right)\left(X\right)$ are close to each
	other.
\end{rem}

As an example application of Theorem \ref{thm:composition_theorem},
we show that the conditional distribution $\pr{Y=y\mid f\left(y\right)\geq tn}$
described in item (\ref{enu:intro_ld_conditional}) in Section \ref{subsec:large_deviations_intro_section}
can be approximated by a ``smoothed-out'' distribution, which gives
equal mass to vectors $y$ satisfying $f\left(y\right)\geq nt$ and
no mass to vectors $y$ satisfying $f\left(y\right)<\left(t-\delta\right)n$.
This ``smoothed-out'' distribution is obtained from a ``smoothed-cutoff''
approximation to the $\tilde{f}$ described in Section \ref{subsec:large_deviations_intro_section}.
Our framework can be applied to this ``smoothed-cutoff'' function,
yielding an equation corresponding to the Lagrange multiplier problem
associated with the rate function.
\begin{thm}[Large deviations]
	\label{thm:large_deviations}Let $t>0$. Let $f:\mathcal C_{n}\to\bR$
	be a Hamiltonian with parameters $D$, $L_{1}$ and $L_{2}$ as described
	in Theorem \ref{thm:main_theorem}, and assume that there exists $z\in\mathcal C_{n}$
	such that $f\left(z\right)\geq tn$. Let $\delta>0$. There exists
	a monotone function $h:\bR\to\bR$, such that for $\varphi=h\circ f$,
	we have that $\varphi\left(y\right)=0$ if $f\left(y\right)<\left(t-2\delta\right)n$,
	$\varphi\left(y\right)=1$ if $f\left(y\right)\geq tn$ and such that
	the following holds. Denote by $\sigma$ the measure defined by 
	\[
	d\sigma=\frac{\varphi d\mu}{\int_{\mathcal C_{n}}\varphi d\mu},
	\]
	and let $X_{\varphi}$ be a random variable whose law is $\sigma$.
	Denote
	\begin{align*}
	\mathcal X_{g} & =\left\{ X\in\overline{\mathcal C_{n}}:\exists\lambda\in\bR\,s.t.\,\onenorm{X-\tanh\left(\lambda\grad f\left(X\right)\right)}\leq5000\tilde{L}_{1}\tilde{L}_{2}^{3/4}\tilde{D}^{1/4}n^{3/4}+\tilde{L}_{3}\right.\\
	& \left.\,\,\,\,\,\,\,\,\,\,\,\,\,\,\,\,\,\,\,\,\,\,\,\,\,\,\,\,\,\,\,\,\,\,\,\,\,\,\,\,\,\,\,\,\,\,\,\,\,\text{and }f\left(X\right)\in\left[\left(t-6\delta\right)n,tn\right]\right\} 
	\end{align*}
	where
	\begin{align*}
	\tilde{D} & =\frac{2}{\delta}D+\frac{2}{\delta^{2}}L_{1}^{2}\\
	\tilde{L}_{1} & =\max\left\{ 1,\frac{2}{\delta}L_{1}\right\} \\
	\tilde{L}_{2} & =\max\left\{ 1,\frac{2}{\delta}L_{2}^{2}+3\frac{2}{\delta^{2}}L_{1}^{2}\right\} \\
	\tilde{L}_{3} & =2\frac{2}{\delta^{2}}L_{1}^{2}n^{1/2}.
	\end{align*}
	Then $X_{\varphi}$ is a $\left(\rho,80\frac{\tilde{D}^{1/4}}{n^{1/4}}+8\cdot2^{-n}\right)$-mixture
	such that 
	\begin{equation}
	\rho\left(\mathcal X_{g}\right)\geq1-\frac{165L_{1}\tilde{D}^{1/4}}{n^{1/4}2\delta}\left(1-\frac{L_{1}}{2\delta\sqrt{n}}-2^{-n}\right)^{-1}.\label{eq:large_deviation_probability_statement}
	\end{equation}
\end{thm}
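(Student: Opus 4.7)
The plan is to construct a smooth Hamiltonian $\psi\circ f$ whose Gibbs measure closely approximates the sharp-cutoff measure $\sigma$, and then invoke Theorem \ref{thm:composition_theorem} on $\psi\circ f$. Concretely, I build $\psi\colon\bR\to\bR$ as a smooth monotone function with $\psi(u)=0$ on $[tn,\infty)$, $\psi(u)=-4n$ on $(-\infty,(t-2\delta)n]$, and a smooth interpolation in between with $|\psi'|\le 2/\delta$ and $|\psi''|\le 2/(\delta^{2}n)$. With these bounds serving as $B_{1},B_{2}$ in Theorem \ref{thm:composition_theorem}, its output parameters become exactly the $\tilde D,\tilde L_{1},\tilde L_{2},\tilde L_{3}$ appearing in the present statement. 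I then define $h$ by $h(u)=e^{\psi(u)}$ for $u\ge(t-2\delta)n$ and $h(u)=0$ otherwise, so $\varphi=h\circ f$ satisfies $\varphi\equiv 0$ below $(t-2\delta)n$ and $\varphi\equiv 1$ above $tn$, as required.

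The measures $\sigma\propto\varphi d\mu$ and $\nu\propto e^{\psi\circ f}d\mu$ are proportional on $\{f\ge(t-2\delta)n\}$. Since some $z\in\mathcal C_n$ has $f(z)\ge tn$, the partition function of $\nu$ is at least $1$, so the $\nu$-mass of $\{f<(t-2\delta)n\}$ is at most $2^{n}e^{-4n}$; in particular $W_1(\sigma,\nu)\le O(n\cdot 2^{-n})$. Theorem \ref{thm:composition_theorem} applied to $\psi\circ f$ then yields that $\nu$ is a tilt-mixture whose mixing measure $\tilde\rho$ is concentrated (up to a $3\tilde D^{1/4}/n^{1/4}$ defect) on $X$ satisfying $\onenorm{X-\tanh(\psi'(f(X))\grad f(X))}\le 5000\tilde L_{1}\tilde L_{2}^{3/4}\tilde D^{1/4}n^{3/4}+\tilde L_{3}$. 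Identifying $\lambda=\psi'(f(X))$ converts this bound into the equation defining $\mathcal X_g$.

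I next transfer the mixture from $\nu$ to $\sigma$: patching the composition-theorem coupling with a rearrangement on the $O(2^{-n})$ discrepancy set produces a coupling between $X_\varphi$ and $X(\rho)$ at an additional Wasserstein cost of $O(n\cdot 2^{-n})$, which accounts for the $8\cdot 2^{-n}$ term, while the constant $80$ (rather than the $3$ of the tilt parameter) absorbs the passage from $(\rho,\delta,\eps)$-tilt-mixture to $(\rho,4\delta)$-mixture (cf.\ the Fact following Definition \ref{def:tilt_decomposition}). For the range constraint $f(X)\in[(t-6\delta)n,tn]$: on a $\tilde\rho$-typical $X$ the corresponding tilt $\tau_\theta\nu$ is $W_1$-close to a product measure $\xi_\theta\in\mathcal{PM}_n$ with mean $X$, and the multilinear extension identity gives $f(X)=\e_{y\sim\xi_\theta}f(y)$; combining this with $\nu$-concentration on $\{f\ge(t-2\delta)n\}$, a Lipschitz bound via $L_1$, and the tilt-mixture $W_1$-slack yields $f(X)\ge(t-6\delta)n$. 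The upper bound $f(X)\le tn$ is obtained via a Markov-type argument exploiting that $\psi'\equiv 0$ on $[tn,\infty)$, which forces $X\approx 0$ (and hence $f(X)\approx f(0)$) whenever $f(X)>tn$; this is where the $L_1/(2\delta\sqrt n)$ correction in the denominator of (\ref{eq:large_deviation_probability_statement}) appears.

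The main obstacle I anticipate is bookkeeping in the last step: carefully tracking the small losses incurred in transferring from $\nu$ to $\sigma$ while preserving the pushforward description of $\rho$, and obtaining both endpoints of the range $[(t-6\delta)n,tn]$ with the correct prefactors. The upper bound $f(X)\le tn$ is particularly delicate, since it requires converting a concentration statement about the smooth Gibbs measure $\nu$ into one about the mean-parameters $X$ indexing the mixture; this is where the $L_1/(2\delta)$ and $L_1/(2\delta\sqrt n)$ factors visible in (\ref{eq:large_deviation_probability_statement}) emerge.
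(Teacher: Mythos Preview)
Your overall architecture matches the paper's: construct a smooth $\psi$ with $|\psi'|\le 2/\delta$ and $|\psi''|\le 2/(n\delta^2)$, apply Theorem \ref{thm:composition_theorem} to $g=\psi\circ f$, show the Gibbs measure $\nu$ of $g$ is $O(2^{-n})$-close to $\sigma$ in total variation (and hence in $W_1$), and patch the mixture over. The identification $\lambda=\psi'(f(X))$ and the parameters $\tilde D,\tilde L_1,\tilde L_2,\tilde L_3$ are correct.

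The gap is in your treatment of the range constraint. For the lower bound $f(X)\ge(t-6\delta)n$ you propose to combine $f(X)=\e_{\xi_\theta}[f]$ with ``$\nu$-concentration on $\{f\ge(t-2\delta)n\}$'' via the tilt $W_1$-slack. But that slack only controls $|\e_{\xi_\theta}[f]-\e_{\tau_\theta\nu}[f]|$, and $\tau_\theta\nu$ is not $\nu$: a single tilt need not inherit $\nu$'s concentration. The paper instead works with the \emph{global} coupling between $X(\rho)$ and $X_n^g\sim\nu$. If $Z\sim\rho$ has $f(Z)<(t-3\delta')n$, then by Markov applied to the bound $\e|f(Y_Z)-f(Z)|\le L_1\sqrt n$ (Proposition \ref{prop:variance_of_f}) one gets $\Pr[f(Y_Z)\ge(t-2\delta')n]\le L_1/(\delta'\sqrt n)$; since $\Pr[f(X_n^g)\ge(t-\delta')n]\ge 1-2^{-n}$, the Lipschitz bound forces $\onenorm{Y_Z-X_n^g}\ge \delta' n/L_1$ on the intersection of these events. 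Integrating this lower bound over the bad set and comparing with $\e\onenorm{X(\rho)-X_n^g}\le 80\tilde D^{1/4}n^{3/4}$ bounds $\rho(\{f(Z)<(t-3\delta')n\})$. This Markov step is precisely where the factor $L_1/(2\delta\sqrt n)$ in the denominator of (\ref{eq:large_deviation_probability_statement}) originates --- not the upper bound, as you state.

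As for the upper bound $f(X)\le tn$: your argument that $\psi'\equiv 0$ above $tn$ forces $X\approx 0$ and hence $f(X)\approx f(0)$ does not give $f(X)\le tn$ without knowing $f(0)\le tn$, which is not assumed. In fact the paper's own proof never addresses this upper bound explicitly; the final estimate is on $\rho(\mathcal X_g\setminus\mathcal Y_g)$ with $\mathcal Y_g$ cutting only from below, and the invocation of Theorem \ref{thm:composition_theorem} in (\ref{eq:middleway_probability}) delivers only the fixed-point inequality, not the range constraint. So this appears to be a lacuna in the paper as well; you should not claim to resolve it, and certainly should not tie the denominator correction to it.
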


Note that the expression $X-\tanh\left(\lambda\grad f\left(X\right)\right)$
in the definition of the set $\mathcal X_{g}$ is closely related to the
rate function: Consider the variational problem

\begin{flalign*}
& \text{minimize\,\ensuremath{H\left(Y\right)}}\\
& \text{subject to \ensuremath{\e f\left(Y\right)\geq tn}}
\end{flalign*}
where $Y$ is a random vector in $\mathcal C_{n}$ whose entries are independent.
By monotonicity, the minimum is attained on the boundary of the constraint.
Denoting $\e Y=y$ and using the method of Lagrange multipliers, we
obtain the equations
\begin{align}
\grad_{y}H\left(Y\right) & =\lambda\grad f\left(y\right)\label{eq:lagrange_mult_1}\\
f\left(y\right) & =tn.\nonumber 
\end{align}
Applying the fact that $\grad_{y}H\left(Y\right)=\tanh^{-1}\left(y\right)$
on equation (\ref{eq:lagrange_mult_1}) gives exactly the equation
$X-\tanh\left(\lambda\grad f\left(X\right)\right)=0$.
\begin{example}[Large deviations for triangle counts]
	\label{exa:triangles}Let $N>0$ be an integer representing the number
	of vertices of a graph, and let $n={N \choose 2}$ be the number of
	possible edges in the graph. We treat each vector $v\in\mathcal C_{n}$
	as a simple graph, with $v_{e}=1$ if and only if the edge $e$ appears
	in the graph. This in turns gives an adjacency matrix $\left(x_{ij}\right)_{i,j=1}^{N}$
	with $x_{ij}=1$ if and only if $v_{\left\{ ij\right\} }=1$. In this
	setting, let $f$ be a triangle-counting function,
	\[
	f\left(x\right)=\frac{\beta}{N}\sum_{i\neq j\neq k}x_{ij}x_{jk}x_{ki}
	\]
	for some real $\beta$. It is shown in \cite{2016_eldan_gaussian_width}
	that $\mathcal D\left(f\right)$ is $O\left(n^{3/4}\right)$ and in \cite{eldan_gross_exponential_random_graphs}
	that $L_{1}$ and $L_{2}$ are bounded by $200\abs{\beta}$. Thus
	we can apply Theorem \ref{thm:large_deviations} to $f$, concluding
	that for a fixed $t>0$ there exists some $\delta=o\left(1\right)$
	and a smoothed cutoff function $h$ with $h\left(x\right)=1$ for
	$x>tn$ and $h\left(x\right)=0$ for $x<(t-\delta)n$ and such that
	the random graph $G$ whose density is proportional to $h\circ f$
	is a $\left(\rho,o\left(1\right)\right)$-mixture such that $\rho\left(\mathcal X_{g}\right)=1-o\left(1\right)$,
	where
	\begin{align*}
	\mathcal X_{g} & =\left\{ X\in\overline{\mathcal C_{n}}:\exists\lambda\in\bR\,s.t.\,\onenorm{X-\tanh\left(\lambda X^{2}\right)}\leq\eps n\right.\\
	& \left.\,\,\,\,\,\,\,\,\,\,\,\,\,\,\,\,\,\,\,\,\,\,\,\,\,\,\,\,\,\,\,\,\,\,\,\,\,\,\,\,\,\,\,\,\,\,\,\,\,\text{and }f\left(X\right)\in\left[\left(t-6\delta\right)n,tn\right]\right\} 
	\end{align*}
	for some $\eps=o\left(1\right)$. Here $X\in\overline{\mathcal{C}_{n}}$
	is treated as an $n\times n$ symmetric matrix with zeros on the diagonal,
	and we understand the expression $X^{2}$ as the usual matrix multiplication,
	with zeros on the diagonal as well. We conjecture that all of the
	points of the set $\mathcal X_{g}$ are close to the solutions obtained
	by Lubetzky and Zhao in \cite{lubetzky_zhao_sparse}. 
	
	Our results extend to triangle counts on sparse graphs as well. In
	this case, expected value of $f$ is of order $np^{3}$, which decays
	to $0$ as $p\to0$. We should therefore take both $t$ to be proportional
	to $p^{3}$ and $\delta$ to be $o\left(p^{3}\right)$. Since the
	bound on the vectors in $\mathcal{X}_{g}$ in Theorem \ref{thm:large_deviations}
	is polynomial in $\delta$, we can consider large deviations for graphs
	whose edge probabilities are proportional to $p\sim n^{-c}$ for some
	constant $c$ (for example, if we wish $\eps$ to be of order $p$,
	we can take $p\sim n^{-1/160}$).
\end{example}

The rest of this paper is organized as follows. Theorem \ref{thm:main_theorem}
is proved in Section \ref{sec:proof-of-main}, while Theorem \ref{thm:composition_theorem}
is proved in Section \ref{sec:composition_theorem}. Corollaries \ref{cor:general_ising}
and \ref{cor:curie_wiess_ising} are proved in Section \ref{subsec:ising_proof}
and \ref{thm:large_deviations} is proved in Section \ref{subsec:large_deviations_proof}.

\section{Proof of main theorem\label{sec:proof-of-main}}

\subsection{Notation and review}

We use the notation from \cite{2016_eldan_gaussian_width}, and rely
on the proofs therein. Here is a brief review of the required terms
and bounds. 

For a probability measure $\nu$ on $\mathcal C_{n}$, we define $f_{\nu}=\log\left(d\nu/d\mu\right)$,
so that the Gibbs distribution with Hamiltonian $f_{\nu}$ is exactly
$\nu$. For every distribution $\nu$ on the hypercube (exponential
or otherwise), we define 
\[
\mathcal H\left(\nu\right)=\int_{\mathcal C_{n}}\tanh\left(\grad f_{\nu}\left(y\right)\right)^{\otimes2}d\nu-\left(\int_{\mathcal C_{n}}\tanh\left(\grad f_{\nu}\left(y\right)\right)d\nu\right)^{\otimes2},
\]
which should be thought of as the covariance matrix of the random
variable $\nabla f_{\nu}\left(X\right)$ with $X\sim\nu$. We will
use the following three results from \cite{2016_eldan_gaussian_width}. 
\begin{prop}[Proposition 17 in \cite{2016_eldan_gaussian_width}]
	\label{prop:eldan_prop_17}Let $\tilde{\nu}$ be a probability distribution
	on $\mathcal C_{n}$. Then there exists a product measure $\xi=\xi\left(\tilde{\nu}\right)$
	such that 
	\begin{equation}
	W_{1}\left(\tilde{\nu},\xi\right)\leq\sqrt{n\tr\left(\mathcal H\left(\tilde{\nu}\right)\right)}.\label{eq:wassertein}
	\end{equation}
	Moreover, one may take $\xi$ to be the unique product measure whose
	center of mass lies at $\int_{\mathcal C_{n}}\tanh\left(\grad f_{\tilde{\nu}}\left(y\right)\right)d\tilde{\nu}\left(y\right)$
	where the $\tanh$ is applied entrywise. 
\end{prop}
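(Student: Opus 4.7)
The plan is to exhibit $\xi$ via matching marginal means and then bound $W_{1}(\tilde\nu,\xi)$ by a sequential-reveal coupling whose per-coordinate discrepancy is controlled by the diagonal of $\mathcal H(\tilde\nu)$.

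First I would observe that for the Gibbs measure $\tilde\nu$ with Hamiltonian $f_{\tilde\nu}$, the one-coordinate conditionals are $\tilde\nu(y_{i}=1\mid y_{-i})=\tfrac{1}{2}(1+\tanh(\partial_{i}f_{\tilde\nu}(y)))$, which is immediate from the definitions of $\partial_{i}f$ and of $\tilde\nu(y)\propto e^{f_{\tilde\nu}(y)}$. Taking expectation gives $\e_{\tilde\nu}[y_{i}]=p_{i}:=\int\tanh(\partial_{i}f_{\tilde\nu})\,d\tilde\nu$, so the product measure $\xi$ with $i$-th marginal mean $p_{i}$ is precisely the candidate asserted in the statement and agrees with $\tilde\nu$ on every single-coordinate marginal.

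Next I would construct a coupling by revealing coordinates $1,2,\dots,n$ sequentially: draw $X\sim\tilde\nu$ by sampling $X_{i}$ from the conditional $\tilde\nu(\,\cdot\mid X_{<i})$, and couple $Y_{i}\sim\xi_{i}$ to $X_{i}$ via the maximal coupling of two Bernoullis. The disagreement probability at step $i$ is then $|\tilde\nu(y_{i}=1\mid X_{<i})-\tfrac{1+p_{i}}{2}|$, and by Jensen's inequality together with the law of total variance,
\[
\e\Bigl|\tilde\nu(y_{i}=1\mid X_{<i})-\tfrac{1+p_{i}}{2}\Bigr|\le\sqrt{\var_{\tilde\nu}\bigl(\tilde\nu(y_{i}=1\mid X_{<i})\bigr)}\le\sqrt{\var_{\tilde\nu}\bigl(\tilde\nu(y_{i}=1\mid X_{-i})\bigr)}=\tfrac{1}{2}\sqrt{\mathcal H(\tilde\nu)_{ii}},
\]
where the final equality uses the conditional formula above together with the definition of $\mathcal H$.

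Summing over $i\in[n]$ and applying Cauchy--Schwarz in the index $i$,
\[
\tfrac{1}{2}\e\onenorm{X-Y}=\sum_{i=1}^{n}\pr{X_{i}\neq Y_{i}}\le\tfrac{1}{2}\sum_{i=1}^{n}\sqrt{\mathcal H(\tilde\nu)_{ii}}\le\tfrac{1}{2}\sqrt{n\,\tr\mathcal H(\tilde\nu)},
\]
which yields $W_{1}(\tilde\nu,\xi)\le\tfrac{1}{2}\sqrt{n\,\tr\mathcal H(\tilde\nu)}$, sharper than the stated bound. The main obstacle is the monotonicity step $\var_{\tilde\nu}(\tilde\nu(y_{i}=1\mid X_{<i}))\le\var_{\tilde\nu}(\tilde\nu(y_{i}=1\mid X_{-i}))$: one uses that refining the conditioning $\sigma$-algebra can only increase the variance of the conditional expectation, which lets us replace the order-dependent partial conditional by the full conditional and thereby exchange it for the cleanly identifiable quantity $\mathcal H(\tilde\nu)_{ii}$ without incurring off-diagonal covariance contributions.
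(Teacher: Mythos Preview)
The paper does not actually prove this proposition; it is quoted verbatim from \cite{2016_eldan_gaussian_width} and used as a black box, so there is no in-paper proof to compare against. That said, your argument is correct and is essentially the argument behind the original result: the key identity $\tilde\nu(y_i=1\mid y_{-i})=\tfrac12(1+\tanh(\partial_i f_{\tilde\nu}(y)))$ both identifies the center of mass of $\xi$ with $\int\tanh(\nabla f_{\tilde\nu})\,d\tilde\nu$ and expresses $\mathcal H(\tilde\nu)_{ii}$ as the variance of the full conditional, after which the sequential maximal coupling and the $L^2$-contraction of conditional expectation (your ``monotonicity step'') finish the job. Your realization of the coupling via a common uniform per coordinate indeed makes the $Y_i$ independent with the correct marginals, and you even pick up the extra factor $\tfrac12$ over the stated bound.
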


\begin{prop}[Proposition 18 together with Lemma 16 in \cite{2016_eldan_gaussian_width}]
	\label{prop:eldan_prop_18}Define $D=\mathcal D\left(f_{\nu}\right)$.
	Let $\eps\in\left(0,1/4\sqrt{\log\left(4n/D\right)}\right)$. Let
	$\nu$ be a probability measure on $\mathcal C_{n}$ and define $f=\log\frac{d\nu}{d\mu}$.
	Then there exists a measure $m$ on $B\left(0,\eps\sqrt{n}\right)\intersect\left[-1/4,1/4\right]^{n}$,
	such that $\nu$ admits the decomposition 
	\begin{equation}
	\int_{\mathcal C_{n}}\varphi d\nu=\int_{B\left(0,\eps\sqrt{n}\right)}\left(\int_{\mathcal C_{n}}\varphi d\tau_{\theta}\left(\nu\right)\right)dm\left(\theta\right)\label{eq:decomposition}
	\end{equation}
	for every test function $\varphi:\mathcal C_{n}\to\bR$, and which satisfies 
	
	\begin{equation}
	m\left(\theta:\,\tr\left(\mathcal H\left(\tau_{\theta}\nu\right)\right)\leq256\frac{n^{1/3}D^{2/3}}{\eps^{2/3}}\right)\geq1-\frac{3D^{1/3}}{n^{1/3}\eps^{1/3}}.\label{eq:trace_h}
	\end{equation}
\end{prop}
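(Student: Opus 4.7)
The plan is to establish both conclusions via Eldan's stochastic localization process. Introduce an $\bR^n$-valued Ito process $(\theta_t)_{t\ge 0}$ starting at $\theta_0=0$ and governed by the SDE
$$d\theta_t \;=\; b(\theta_t)\,dt + dW_t, \qquad b(\theta) := \int_{\mathcal C_n} y\,d(\tau_\theta\nu)(y),$$
where $W_t$ is a standard Brownian motion in $\bR^n$. The drift $b$ is engineered precisely so that the pointwise Radon--Nikodym derivative $F_t(y) := d(\tau_{\theta_t}\nu)/d\nu(y)$ is a martingale for each fixed $y \in \mathcal C_n$; this is a one-line Ito computation using $\grad_\theta \log Z(\theta) = b(\theta)$ for the log-partition $Z(\theta) = \int e^{\langle\theta,y\rangle}\,d\nu(y)$, combined with the fact that $\|y\|^2 = n$ is constant on $\mathcal C_n$. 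Taking expectations against any test $\varphi$ immediately yields
$$\int_{\mathcal C_n} \varphi\,d\nu \;=\; \e\!\left[\int_{\mathcal C_n}\varphi\,d(\tau_{\theta_t}\nu)\right], \qquad t\ge 0,$$
so declaring $m$ to be the law of $\theta_\tau$ at an appropriately chosen stopping time $\tau$ produces the decomposition (\ref{eq:decomposition}).

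To arrange $\mathrm{supp}(m) \subset B(0,\eps\sqrt n) \cap [-1/4,1/4]^n$, I would set $\tau = t_0 \wedge \sigma$ where $t_0$ is a deterministic terminal time and $\sigma$ is the first exit of $\theta_t$ from that region. Because $\|b\|_\infty \le 1$ the deterministic drift is harmless, and for small $t_0$ the Brownian displacement dominates. A union bound over coordinates combined with Gaussian tail estimates, together with the logarithmic hypothesis $\eps < 1/(4\sqrt{\log(4n/D)})$ that controls the union bound, ensures $\tau = t_0$ up to a probability that can be absorbed into the exceptional set in (\ref{eq:trace_h}).

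The crux is the trace bound. Since $f_{\tau_\theta \nu}(y) = f_\nu(y) + \langle\theta, y\rangle + \text{const}$, one has $\grad f_{\nu_t}(y) = \grad f_\nu(y) + \theta_t$, so $\mathcal H(\nu_t)$ captures the $\nu_t$-covariance of the shifted quantity $\tanh(\grad f_\nu(y) + \theta_t)$. Applying Ito's formula to $\log Z(\theta_t)$ (or to a closely related entropy functional) expresses the cumulative trace
$$\e\!\int_0^{t_0} \tr(\mathcal H(\nu_s))\,ds$$
as the relative-entropy increment $\e[\log Z(\theta_{t_0})] - \log Z(0)$ plus drift-induced corrections arising from the nonlinearity of $\tanh$. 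Since $\theta_t = W_t + \int_0^t b(\theta_s)\,ds$ is a drifted Brownian motion, the variational representation of entropy combined with the defining inequality for the Gaussian width $\mathcal D(f)$ bounds this in expectation by $C\sqrt{nt_0}\,D$, after a substitution that replaces the linear functional $y$ by the nonlinear $\grad f(y)$. Dividing by $t_0$ and calibrating $t_0 \asymp \eps^{4/3} n^{1/3}/D^{2/3}$ to balance the Brownian budget against the Gaussian-width budget, then applying Markov's inequality, delivers (\ref{eq:trace_h}) with the stated constants.

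The principal obstacle is the last calibration: tying $\log Z(\theta_t)$ to the Gaussian width of the image $\{\grad f(y) : y \in \mathcal C_n\}$, rather than merely to that of $\mathcal C_n$ itself, requires a change of variable under the tilt and a Slepian/Dudley-type comparison — exactly the content of Lemma 16 of the cited reference. Keeping track of the nonlinear corrections from $\tanh$ and verifying that the chosen $t_0$ simultaneously respects the cube constraint $[-1/4,1/4]^n$, the ball constraint $B(0,\eps\sqrt n)$, and the target trace threshold with the prescribed probability is where most of the technical bookkeeping lives.
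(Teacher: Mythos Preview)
The paper does not prove this proposition at all: it is quoted verbatim as an imported result from \cite{2016_eldan_gaussian_width} (see the sentence ``We will use the following three results from \cite{2016_eldan_gaussian_width}'' preceding Propositions~\ref{prop:eldan_prop_17}--\ref{prop:eldan_prop_18} and Lemma~\ref{lem:fix_that_theta}). So there is no in-paper argument to compare your proposal against.

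That said, your sketch is a faithful outline of the method actually used in the cited reference: Eldan's stochastic localization, where the tilt $\theta_t$ evolves by the SDE $d\theta_t = b(\theta_t)\,dt + dW_t$ with barycenter drift, the pointwise density $F_t(y)$ is a martingale, and $m$ is the law of $\theta_\tau$ at a suitable stopping time. One point to tighten: the functional whose It\^o expansion produces $\tr\mathcal H(\nu_t)$ is not $\log Z(\theta_t)$ itself, whose Hessian is $\mathrm{Cov}_{\nu_t}(y)$ rather than $\mathrm{Cov}_{\nu_t}\bigl(\tanh(\grad f_{\nu_t}(y))\bigr)$. In \cite{2016_eldan_gaussian_width} the relevant object is the process $t\mapsto \int \tanh(\grad f_{\nu_t}(y))\,d\nu_t(y)$, whose quadratic variation is exactly $\mathcal H(\nu_t)$; the Gaussian-width bound $\mathcal D(f)$ then controls the expected increment of this vector-valued martingale, and an averaging/Markov step turns the integrated bound $\e\int_0^{t_0}\tr\mathcal H(\nu_s)\,ds$ into the stated pointwise estimate at a random time in $[0,t_0]$. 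Your phrase ``or to a closely related entropy functional'' and the reference to Lemma~16 correctly flag this as the crux, but the substitution you describe is not a change of variable in $\log Z$; it is a genuinely different martingale.
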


\begin{lem}[Lemma 24 in \cite{2016_eldan_gaussian_width}]
	\label{lem:fix_that_theta}Let $\theta\in\bR^{n}$ and let $\nu,\tilde{\nu}$
	be probability measures on $\mathcal C_{n}$. Define 
	\[
	A=\int_{\mathcal C_{n}}\tanh\left(\grad f_{\nu}\left(y\right)\right)^{\otimes2}d\tilde{\nu}-\left(\int_{\mathcal C_{n}}\tanh\left(\grad f_{\nu}\left(y\right)\right)d\tilde{\nu}\right)^{\otimes2}
	\]
	and
	
	\[
	B=\int_{\mathcal C_{n}}\tanh\left(\grad f_{\tau_{\theta}\nu}\left(y\right)\right)^{\otimes2}d\tilde{\nu}-\left(\int_{\mathcal C_{n}}\tanh\left(\grad f_{\tau_{\theta}\nu}\left(y\right)\right)d\tilde{\nu}\right)^{\otimes2}.
	\]
	Then 
	\[
	e^{-4\norm{\theta}_{\infty}}\tr B\leq\tr A\leq e^{4\norm{\theta}_{\infty}}\tr B.
	\]
\end{lem}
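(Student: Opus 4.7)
The plan is to reduce the inequality to a coordinatewise comparison of $\tanh$-variances under a constant shift. I would begin with the observation that tilting by $\theta$ shifts the discrete gradient by $\theta$: from
\[
f_{\tau_\theta\nu}(y)=f_\nu(y)+\langle\theta,y\rangle-\log Z_\theta, \qquad Z_\theta=\int_{\cal C_n}e^{\langle\theta,z\rangle}\,d\nu,
\]
the log-partition term is constant in $y$, and by Definition \ref{def:discrete_gradient} the discrete gradient of $y\mapsto\langle\theta,y\rangle$ equals $\theta$, so $\grad f_{\tau_\theta\nu}(y)=\grad f_\nu(y)+\theta$.

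Consequently, expanding the outer product and trace, both $\tr A$ and $\tr B$ split into sums of coordinate variances:
\[
\tr A = \sum_{i=1}^n \var_{\tilde\nu}\bigl(\tanh(\partial_i f_\nu)\bigr), \qquad \tr B = \sum_{i=1}^n \var_{\tilde\nu}\bigl(\tanh(\partial_i f_\nu + \theta_i)\bigr).
\]
Thus it suffices to show that for each real $c$ and each coordinate $i$,
\[
e^{-4|c|}\,\var_{\tilde\nu}\bigl(\tanh(\partial_i f_\nu)\bigr) \;\le\; \var_{\tilde\nu}\bigl(\tanh(\partial_i f_\nu+c)\bigr) \;\le\; e^{4|c|}\,\var_{\tilde\nu}\bigl(\tanh(\partial_i f_\nu)\bigr);
\]
summing over $i$ and using $|\theta_i|\le\norm\theta_\infty$ will then yield the lemma.

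The coordinatewise bound follows from an elementary analytic estimate. Using the addition formula $\cosh(t+c)=\cosh(t)\cosh(c)+\sinh(t)\sinh(c)$ together with $|\sinh(t)|\le\cosh(t)$ and $\cosh(c)\pm\sinh(|c|)=e^{\pm|c|}$, one obtains $e^{-|c|}\cosh(t)\le\cosh(t+c)\le e^{|c|}\cosh(t)$ for every real $t,c$, and hence
\[
e^{-2|c|}\,\mathrm{sech}^2(t) \;\le\; \mathrm{sech}^2(t+c) \;\le\; e^{2|c|}\,\mathrm{sech}^2(t).
\]
Since $\tanh'=\mathrm{sech}^2$, integrating this between any $u$ and $v$ gives
\[
e^{-2|c|}\bigl|\tanh(v)-\tanh(u)\bigr| \;\le\; \bigl|\tanh(v+c)-\tanh(u+c)\bigr| \;\le\; e^{2|c|}\bigl|\tanh(v)-\tanh(u)\bigr|.
\]
Squaring and applying the identity $\var(F(Y))=\tfrac12\,\e[(F(Y)-F(Y'))^2]$ for independent copies $Y,Y'\sim\tilde\nu$ (with $F(y)=\tanh(\partial_i f_\nu(y))$ and $F(y)=\tanh(\partial_i f_\nu(y)+c)$ respectively) produces the desired coordinatewise variance bound.

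I do not anticipate a serious obstacle. The only step deserving careful verification is the identity $\grad f_{\tau_\theta\nu}=\grad f_\nu+\theta$, which rests on the discrete gradient being linear and reproducing $\theta$ on the linear function $y\mapsto\langle\theta,y\rangle$; this is immediate from the definition of $\partial_i$. Everything else is one-variable calculus, and the constant $4$ in the exponent arises naturally from the squaring step applied to the sech$^2$ estimate.
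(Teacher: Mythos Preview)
The paper does not supply its own proof of this lemma; it is quoted verbatim from \cite{2016_eldan_gaussian_width} and used as a black box. So there is nothing in the present paper to compare against.

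Your argument is correct and complete. The three ingredients are exactly the right ones: (i) the tilt identity $\grad f_{\tau_\theta\nu}=\grad f_\nu+\theta$, which follows from $\partial_i\langle\theta,y\rangle=\theta_i$ and the fact that $\log Z_\theta$ has zero gradient; (ii) the diagonal expansion $\tr A=\sum_i\var_{\tilde\nu}\bigl(\tanh(\partial_i f_\nu)\bigr)$ and likewise for $B$; and (iii) the one-variable estimate $e^{-2|c|}\le\mathrm{sech}^2(t+c)/\mathrm{sech}^2(t)\le e^{2|c|}$, which after integrating, squaring, and passing through the independent-copy variance identity yields the factor $e^{\pm4|c|}$. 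The summation over $i$ with $|\theta_i|\le\|\theta\|_\infty$ then finishes. This is in fact the natural (and, to my knowledge, the original) route to the inequality; I see no gap.
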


We can now describe the general plan of our proof. Fix $\eps>0$,
and let $m$ be the measure obtained from Proposition \ref{prop:eldan_prop_18}.
Denote by $\Theta$ the set 
\begin{equation}
\Theta=\left\{ \theta\in\bR^{n}:\,\tr\left(\mathcal H\left(\tau_{\theta}\nu\right)\right)\leq256\frac{n^{1/3}D^{2/3}}{\eps^{2/3}}\right\} .\label{eq:trace_in_intution}
\end{equation}
For every $\theta\in\bR^{n}$, denote by $\xi_{\theta}$ the unique
product measure with the same marginals as $\tau_{\theta}\nu$, and
by $A\left(\theta\right)$ the vector 
\[
A\left(\theta\right)=\mathrm{\e}_{X\sim\tau_{\theta}\nu}\left[X\right].
\]
Denote by $\rho$ the push-forward of the measure $m$ under the map
$\theta\mapsto A\left(\theta\right)$ and define
\[
\mathcal X=\left\{ A\left(\theta\right);\theta\in\Theta\right\} .
\]
In order to prove Theorem \ref{thm:main_theorem}, all we have to
do is that show that for each $\theta\in\Theta$, the corresponding
$A\left(\theta\right)$ is close in the one-norm to $\tanh\left(\grad f\left(A\left(\theta\right)\right)\right)$;
this will show equation (\ref{eq:main_theorem_probability_statement}).
In other words, we need the following proposition:
\begin{prop}
	\label{prop:main_proposition}Let $\theta\in\Theta$ and let $Y\sim\xi_{\theta}$.
	Then for every $\eps>0$, 
\end{prop}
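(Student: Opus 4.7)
My plan is to bound $\onenorm{A(\theta)-\tanh(\grad f(A(\theta)))}$ for $\theta\in\Theta$ by a triangle inequality that pivots through the product measure $\xi_\theta$ and the deterministic point $A(\theta)$. The starting observation is that Proposition~\ref{prop:eldan_prop_17}, applied to the tilted measure $\tau_\theta\nu$ (whose log-density gradient is $\grad f+\theta$ and whose center of mass is $A(\theta)$), yields the identity
$$A(\theta) \;=\; \int_{\mathcal C_n}\tanh(\grad f(x)+\theta)\, d\tau_\theta\nu(x).$$
With this in hand I would split
$$A(\theta) - \tanh(\grad f(A(\theta))) \;=\; \mathrm{(I)} + \mathrm{(II)} + \mathrm{(III)},$$
where $\mathrm{(I)} = A(\theta) - \e_{Y\sim\xi_\theta}[\tanh(\grad f(Y)+\theta)]$ switches $\tau_\theta\nu$ to $\xi_\theta$, $\mathrm{(II)} = \e_{Y\sim\xi_\theta}[\tanh(\grad f(Y)+\theta)] - \tanh(\grad f(A(\theta))+\theta)$ pushes the expectation inside $\tanh\circ\grad f$, and $\mathrm{(III)} = \tanh(\grad f(A(\theta))+\theta) - \tanh(\grad f(A(\theta)))$ removes the additive tilt.

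For $\mathrm{(I)}$, the map $y\mapsto\tanh(\grad f(y)+\theta)$ is $L_2$-Lipschitz as a map $\ell_1\to\ell_1$ (since $\tanh$ is $1$-Lipschitz and $\grad f$ is $L_2$-Lipschitz in $\ell_1$), so pairing this with the bound $W_1(\tau_\theta\nu,\xi_\theta)\le\sqrt{n\tr\mathcal H(\tau_\theta\nu)}$ from Proposition~\ref{prop:eldan_prop_17} and the definition of $\Theta$ coming from Proposition~\ref{prop:eldan_prop_18} will yield $\onenorm{\mathrm{(I)}}\le 32L_2 n^{2/3}D^{1/3}/\eps^{1/3}$. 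For $\mathrm{(III)}$, $1$-Lipschitzness of $\tanh$ gives coordinatewise $\onenorm{\mathrm{(III)}}\le\onenorm{\theta}\le\sqrt n\,\norm{\theta}_2\le\eps n$, using that $m$ is supported in $B(0,\eps\sqrt n)$.

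The subtle piece is $\mathrm{(II)}$. The key observation is that the harmonic extension makes $\grad f$ multilinear, so under the product measure $\xi_\theta$ with mean $A(\theta)$ one has the \emph{exact} equality $\e_{Y\sim\xi_\theta}[\grad f(Y)]=\grad f(A(\theta))$. I would then Taylor-expand $\tanh$ coordinatewise to first order around $\grad f(A(\theta))+\theta$: the linear-in-$(Y-A(\theta))$ term has vanishing expectation because of the multilinearity identity, and the quadratic remainder is bounded per coordinate by a constant multiple of $\var_{\xi_\theta}((\grad f(Y))_i)$. Summing gives $\onenorm{\mathrm{(II)}}\lesssim \sum_i\var_{\xi_\theta}((\grad f(Y))_i)$, which by Efron--Stein/Poincar\'e I would bound by $\sum_{i,j}(\partial^2_{ij}f)^2$, a Frobenius-type Hessian norm that one tries to estimate using $|\partial^2_{ij}f|\le L_1$ and $\sum_i|\partial^2_{ij}f|\le L_2$ implicit in the definitions of $L_1,L_2$.

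Combining $\mathrm{(I)}$, $\mathrm{(II)}$ and $\mathrm{(III)}$, the proof of Theorem~\ref{thm:main_theorem} will then choose $\eps\sim L_2^{3/4}D^{1/4}/n^{1/4}$ to balance the Wasserstein contribution $n^{2/3}D^{1/3}/\eps^{1/3}$ against the tilt contribution $\eps n$, reproducing the $L_1L_2^{3/4}D^{1/4}n^{3/4}$ scaling of the main theorem. The hardest part, as I see it, will be controlling $\mathrm{(II)}$ tightly enough: my naive Efron--Stein bound $\sum_{i,j}(\partial^2_{ij}f)^2\le nL_1L_2$ is already of order $n$ and would dominate the other two terms. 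A sharper estimate is needed — either one that relates the Hessian-Frobenius sum back to $\cal D(f)$ directly (as is visible in the Ising example, where $\sum_{i,j}(\partial^2_{ij}f)^2\lesssim \cal D(f)^2/n$), or one that transfers the product-measure variance back to the $\tau_\theta\nu$-variance $\tr\cal H(\tau_\theta\nu)$, which is already controlled on $\Theta$ by Lemma~\ref{lem:fix_that_theta}.
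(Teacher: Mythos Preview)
Your decomposition into $\mathrm{(I)}+\mathrm{(II)}+\mathrm{(III)}$ and your treatment of $\mathrm{(I)}$ and $\mathrm{(III)}$ are essentially what the paper does (compare equations~(\ref{eq:component_1}) and~(\ref{eq:component_2}) and Proposition~\ref{prop:part_one}). The gap is exactly where you locate it: term $\mathrm{(II)}$.

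Neither of your proposed fixes closes it with the stated dependence on the parameters. Relating the Hessian--Frobenius sum $\sum_{i,j}(\partial^2_{ij}f)^2$ to $\mathcal D(f)$ happens to hold for the Ising Hamiltonian because $f$ is quadratic there, but there is no such inequality in general. Transferring the $\xi_\theta$-variance of $\partial_i f(Y)$ back to $\tr\mathcal H(\tau_\theta\nu)$ requires comparing $\var(\partial_i f)$ with $\var(\tanh(\partial_i f+\theta_i))$; since $\tanh'$ is only bounded below by $1/\cosh^2(L_1+\tfrac14)$ on the relevant range, this comparison costs a factor exponential in $L_1$, not the linear factor $L_1$ appearing in the proposition.

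The paper avoids the variance of $\grad f(Y)$ altogether. The missing ingredient is Lemma~\ref{lem:variance_of_tanh}: for $|Z|\le L$,
\[
\bigl|\tanh(\e Z)-\e\tanh(Z)\bigr|\;\le\;20L\cdot\e\bigl|\tanh(Z)-\e\tanh(Z)\bigr|.
\]
Applied coordinatewise with $Z=\partial_i f(Y)$, this replaces your second-order Taylor remainder by $20L_1$ times the $\ell_1$-deviation $\e\onenorm{\tanh(\grad f(Y))-\e\tanh(\grad f(Y))}$. The point is that this deviation stays \emph{inside} the $\tanh$, so it can be pivoted through $\e\tanh(\grad f(X))$ and bounded by the very same Wasserstein and Cauchy--Schwarz estimates that already handled $\mathrm{(I)}$ (this is the content of Proposition~\ref{prop:part_one} and equation~(\ref{eq:tanhx_is_close_to_etanhx})). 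One obtains $\onenorm{\mathrm{(II)}}\lesssim L_1\bigl(L_2\,n^{2/3}D^{1/3}/\eps^{1/3}+\eps n\bigr)$, which is precisely what produces the $L_1L_2^{3/4}D^{1/4}n^{3/4}$ scaling after the choice $\eps=L_2^{3/4}D^{1/4}/n^{1/4}$.
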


\begin{align*}
\onenorm{\tanh\left(\grad f\left(\e Y\right)\right)-\e Y} & \leq41L_{1}\left(112L_{2}\frac{n^{2/3}D^{1/3}}{\eps^{1/3}}+\eps n\right).
\end{align*}
Relying on the above, we can prove of Theorem \ref{thm:main_theorem}.
\begin{proof}[Proof of Theorem \ref{thm:main_theorem}]
	Define the measure $\rho$ and the set $\mathcal X$ as above. Set $\eps=\frac{D^{1/4}L_{2}^{3/4}}{n^{1/4}}$.
	Items (\ref{enu:tilt_decomposition_1_decomposition})-(\ref{enu:tilt_decomposition_3_push_forward})
	in definition \ref{def:tilt_decomposition} follow immediately from
	Proposition \ref{prop:eldan_prop_17} and \ref{prop:eldan_prop_18}
	by choice of $\eps$, $\delta$ and $\rho$. By Proposition \ref{prop:main_proposition}
	for all $\theta\in\Theta$, we have
	\begin{align*}
	\onenorm{\tanh\left(\grad f\left(\e Y\right)\right)-\e Y} & \leq41L_{1}\left(113L_{2}^{3/4}D^{1/4}n^{3/4}\right)\\
	& \leq5000L_{1}L_{2}^{3/4}D^{1/4}n^{3/4}.
	\end{align*}
	This implies that $\mathcal X\subseteq\mathcal X_{f}$, and together with
	Proposition \ref{prop:eldan_prop_18} and by choice of $\eps$, this
	shows that $\rho\left(\mathcal X_{f}\right)\geq1-\frac{3D^{1/4}}{n^{1/4}}$,
	satisfying equation (\ref{eq:main_theorem_probability_statement}).
\end{proof}
The rest of this section is devoted to proving Proposition \ref{prop:main_proposition}.

\subsection{Approximate fixed point}

Let $\theta\in\Theta$ be a tilt and let $\xi_{\theta}$ be the product
measure whose center of mass lies at $\int_{\mathcal C_{n}}\tanh\left(\grad f_{\tau_{\theta}\nu}\left(y\right)\right)d\tau_{\theta}\nu\left(y\right)$.
Throughout the proof we will assume $X\sim\tau_{\theta}\nu$ and $Y\sim\xi_{\theta}$.
A direct calculation shows that under this notation, $\e Y=\e\tanh\left(\grad f\left(X\right)+\theta\right)$:
\begin{align*}
\e Y & =\int_{\mathcal C_{n}}\tanh\left(\grad f_{\tau_{\theta}\nu}\left(y\right)\right)d\tau_{\theta}\nu\left(y\right)\\
& =\int_{\mathcal C_{n}}\tanh\left(\grad\left(\log\left(\frac{d\tau_{\theta}\nu}{d\nu}\right)+\log\left(\frac{d\nu}{d\mu}\right)\right)\left(y\right)\right)d\tau_{\theta}\nu\left(y\right)\\
& =\int_{\mathcal C_{n}}\tanh\left(\theta+\grad f_{\nu}\left(y\right)\right)d\tau_{\theta}\nu\left(y\right)\\
& =\e\tanh\left(\grad f\left(X\right)+\theta\right).
\end{align*}
This gives 
\begin{align}
\onenorm{\e Y-\e\tanh\left(\grad f\left(X\right)\right)} & \leq\onenorm{\e\tanh\left(\grad f\left(X\right)\right)-\e\tanh\left(\grad f\left(X\right)+\theta\right)}\nonumber \\
& \leq\onenorm{\theta}\nonumber \\
\left(\twonorm{\theta}\leq\eps\sqrt{n}\right) & \leq\eps n.\label{eq:component_1}
\end{align}
where in the second inequality we use the fact that
\begin{equation}
\abs{\tanh\left(x\right)-\tanh\left(y\right)}\leq\abs{x-y}.\label{eq:tanh_is_contracting}
\end{equation}
\begin{prop}
	\label{prop:part_one}Let $Y\sim\xi_{\theta}$. Then 
	\[
	\e\onenorm{\tanh\left(\grad f\left(Y\right)\right)-\e Y}\leq64L_{2}\frac{n^{2/3}D^{1/3}}{\eps^{1/3}}+\eps n.
	\]
\end{prop}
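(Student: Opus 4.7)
The plan is to couple $Y \sim \xi_\theta$ with $X \sim \tau_\theta \nu$ using the $W_1$-optimal coupling guaranteed by Proposition \ref{prop:eldan_prop_17}, and then to apply a triangle inequality with pivot $\tanh(\grad f(X) + \theta)$. This pivot is natural because the preceding calculation established that $\e Y = \e \tanh(\grad f(X) + \theta)$. Concretely, I would write
\[
\e \onenorm{\tanh(\grad f(Y)) - \e Y} \leq \underbrace{\e \onenorm{\tanh(\grad f(Y)) - \tanh(\grad f(X) + \theta)}}_{(I)} + \underbrace{\e \onenorm{\tanh(\grad f(X) + \theta) - \e Y}}_{(II)},
\]
and bound each term using the fact that $\theta \in \Theta$.

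For term $(I)$, the coordinatewise $1$-Lipschitz property of $\tanh$ and the definition of $L_2$ in (\ref{eq:bound_on_grad_lipschitz}) give $(I) \leq L_2 \e \onenorm{Y - X} + \onenorm{\theta}$. Because the coupling is $W_1$-optimal, Proposition \ref{prop:eldan_prop_17} combined with $\theta \in \Theta$ yields $\e \onenorm{Y - X} \leq 2\sqrt{n \tr \mathcal H(\tau_\theta \nu)} \leq 32 n^{2/3} D^{1/3} \eps^{-1/3}$, while Cauchy-Schwarz on $\twonorm{\theta} \leq \eps \sqrt{n}$ gives $\onenorm{\theta} \leq \eps n$. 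For term $(II)$, the key observation is that $\e Y$ is exactly the mean of the random vector $V := \tanh(\grad f(X) + \theta) = \tanh(\grad f_{\tau_\theta \nu}(X))$ under $X \sim \tau_\theta \nu$, so $(II)$ is the expected $\ell_1$-deviation of $V$ from its own mean. Applying Jensen coordinatewise gives $\e |V_i - \e V_i| \leq \sqrt{\var V_i}$, and Cauchy-Schwarz across coordinates then gives $(II) \leq \sqrt{n \sum_i \var V_i} = \sqrt{n \tr \mathcal H(\tau_\theta \nu)} \leq 16 n^{2/3} D^{1/3} \eps^{-1/3}$.

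Adding $(I) + (II)$ and using $L_2 \geq 1$ produces a bound of $48 L_2 n^{2/3} D^{1/3} \eps^{-1/3} + \eps n$, comfortably within the stated $64 L_2 n^{2/3} D^{1/3} \eps^{-1/3} + \eps n$. The only point requiring attention is the identification $\sum_i \var V_i = \tr \mathcal H(\tau_\theta \nu)$, which follows directly because $\grad f_{\tau_\theta \nu}(y) = \grad f(y) + \theta$ is precisely the argument of $\tanh$ appearing in the definition of $\mathcal H$; everything else is a routine assembly of Propositions \ref{prop:eldan_prop_17} and \ref{prop:eldan_prop_18}, so I do not anticipate a real obstacle.
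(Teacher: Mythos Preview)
Your argument is correct and follows essentially the same skeleton as the paper's proof: couple $Y\sim\xi_\theta$ with $X\sim\tau_\theta\nu$ via Proposition~\ref{prop:eldan_prop_17}, bound the $\ell_1$-deviation of the $\tanh$-gradient vector by $\sqrt{n\,\tr\mathcal H(\tau_\theta\nu)}$ through Cauchy--Schwarz, and invoke the trace bound from Proposition~\ref{prop:eldan_prop_18}.

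The one substantive difference is your choice of pivot. The paper pivots at $\tanh(\grad f(X))$ (without the shift $\theta$), so that the variance it needs to control is that of $\tanh(\grad f_\nu(X))$ under $\tau_\theta\nu$; this is not literally $\tr\mathcal H(\tau_\theta\nu)$, and the paper spends Lemma~\ref{lem:fix_that_theta} (together with $\|\theta\|_\infty\le 1/4$) to pass between the two at the cost of a factor~$3$. You instead pivot at $\tanh(\grad f(X)+\theta)=\tanh(\grad f_{\tau_\theta\nu}(X))$, whose coordinate variances sum \emph{exactly} to $\tr\mathcal H(\tau_\theta\nu)$, so Lemma~\ref{lem:fix_that_theta} is never needed and the $\theta$-contribution is absorbed directly into term~$(I)$ as an additive $\|\theta\|_1\le\eps n$. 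This yields the slightly sharper constant $48$ in place of $64$ and is a cleaner route; otherwise the two proofs are the same.
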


\begin{proof}
	For $X\sim\tau_{\theta}\nu$, consider the random variable $Z=\norm{\tanh\left(\grad f\left(X\right)\right)-\e\tanh\left(\grad f\left(X\right)\right)}_{2}^{2}$.
	A short calculation shows that the expectation of $Z$ is roughly
	$\tr\mathcal H\left(\tau_{\theta}\nu\right)$:
	\begin{align*}
	\e Z & =\e\norm{\tanh\left(\grad f\left(X\right)\right)-\e\tanh\left(\grad f\left(X\right)\right)}_{2}^{2}\\
	& =\sum_{i=1}^{n}\e\left[\tanh\left(\grad f\left(X\right)\right)_{i}^{2}\right]-\sum_{i=1}^{n}\left(\e\tanh\left(\grad f\left(X\right)\right)_{i}\right)^{2}\\
	& \leq3\left(\sum_{i=1}^{n}\e\left[\tanh\left(\grad f\left(X\right)+\theta\right)_{i}^{2}\right]-\sum_{i=1}^{n}\left(\e\tanh\left(\grad f\left(X\right)+\theta\right)_{i}\right)^{2}\right)\\
	& =3\tr\left(\mathcal H\left(\tau_{\theta}\nu\right)\right)
	\end{align*}
	where the inequality is by Lemma \ref{lem:fix_that_theta} with $\nu$
	and $\tilde{\nu}=\tau_{\theta}\nu$ and the fact that $\norm{\theta}_{\infty}\leq1/4$.
	Thus by equation (\ref{eq:trace_h}),
	\begin{align*}
	\e\norm{\tanh\left(\grad f\left(X\right)\right)-\e\tanh\left(\grad f\left(X\right)\right)}_{2}^{2} & \leq3\cdot256\frac{n^{1/3}D^{2/3}}{\eps^{2/3}},
	\end{align*}
	and together with the Cauchy-Schwarz inequality, we have that
	\begin{align}
	\e\onenorm{\tanh\left(\grad f\left(X\right)\right)-\e\tanh\left(\grad f\left(X\right)\right)} & \leq\sqrt{n}\e\twonorm{\tanh\left(\grad f\left(X\right)\right)-\e\tanh\left(\grad f\left(X\right)\right)}\nonumber \\
	& \leq32\frac{n^{2/3}D^{1/3}}{\eps^{1/3}}.\label{eq:tanhx_is_close_to_etanhx}
	\end{align}
	By Proposition \ref{prop:eldan_prop_17}, there exists a coupling
	between $\tau_{\theta}\nu$ and $\xi_{\theta}$ such that 
	\begin{align*}
	\e\onenorm{X-Y} & \leq2\sqrt{n\tr\mathcal H\left(\tau_{\theta}\nu\right)}\\
	\left(\text{by equation (\ref{eq:trace_h})}\right) & \leq32\frac{n^{2/3}D^{1/3}}{\eps^{1/3}}.
	\end{align*}
	Thus, since by equations (\ref{eq:bound_on_grad_lipschitz}) and (\ref{eq:tanh_is_contracting}),
	\begin{align}
	\e\onenorm{\tanh\left(\grad f\left(X\right)\right)-\tanh\left(\grad f\left(Y\right)\right)} & \leq\e\onenorm{\grad f\left(X\right)-\grad f\left(Y\right)}\nonumber \\
	& \leq L_{2}\e\onenorm{X-Y}\nonumber \\
	& \leq32L_{2}\frac{n^{2/3}D^{1/3}}{\eps^{1/3}}.\label{eq:component_2}
	\end{align}
	Combining equations (\ref{eq:tanhx_is_close_to_etanhx}), (\ref{eq:component_1})
	and (\ref{eq:component_2}) together with the triangle inequality
	finally gives
	\begin{align*}
	\e\onenorm{\tanh\left(\grad f\left(Y\right)\right)-\e Y} & \leq32\left(1+L_{2}\right)\frac{n^{2/3}D^{1/3}}{\eps^{1/3}}+\eps n\\
	& \leq64L_{2}\frac{n^{2/3}D^{1/3}}{\eps^{1/3}}+\eps n
	\end{align*}
	as needed.
\end{proof}
\begin{lem}
	\label{lem:variance_of_tanh}Let $Z$ be an almost-surely bounded
	random variable, $\abs Z\leq L$ with $L\geq1$. Then 
	\begin{align*}
	\abs{\tanh\left(\e Z\right)-\e\tanh\left(Z\right)} & \leq20L\cdot\e\abs{\tanh\left(Z\right)-\e\tanh\left(Z\right)}.
	\end{align*}
\end{lem}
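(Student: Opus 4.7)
The plan is to bound the Jensen gap $|\tanh(\e Z) - \e\tanh(Z)|$ by reducing it, via a second-order Taylor expansion, to a covariance between $Z$ and $\tanh(Z)$, which can then be controlled linearly in $L$. Write $m = \e Z$ and $p = \e\tanh(Z)$.

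First, I would apply Taylor's theorem with integral remainder:
\[
\tanh(Z) - \tanh(m) - \tanh'(m)(Z - m) = \int_m^Z (Z - s)\tanh''(s)\,ds.
\]
Taking expectations, the linear term drops out since $\e(Z-m) = 0$, which gives
\[
|\tanh(m) - p| \;\leq\; \e \int_{\min(Z,m)}^{\max(Z,m)} |Z - s|\,|\tanh''(s)|\,ds.
\]

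Next, I would use the identity $\tanh''(s) = -2\tanh(s)\tanh'(s)$ together with $|\tanh(s)| \leq 1$ to deduce $|\tanh''(s)| \leq 2\tanh'(s)$. Combined with $|Z - s| \leq |Z - m|$ on the interval of integration and the fact that $\int_{\min(Z,m)}^{\max(Z,m)} \tanh'(s)\,ds = |\tanh(Z) - \tanh(m)|$, this yields
\[
|\tanh(m) - p| \;\leq\; 2\,\e\bigl[\,|Z - m|\cdot|\tanh(Z) - \tanh(m)|\,\bigr].
\]

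The crucial observation is that since $\tanh$ is strictly increasing, $Z - m$ and $\tanh(Z) - \tanh(m)$ always carry the same sign, so the absolute values may be dropped and the right-hand side recognized as a covariance. Using $\e(Z - m) = 0$:
\[
\e\bigl[(Z - m)(\tanh(Z) - \tanh(m))\bigr] = \e\bigl[(Z - m)(\tanh(Z) - p)\bigr].
\]
Finally, I would bound this covariance by
\[
\bigl|\e\bigl[(Z - m)(\tanh(Z) - p)\bigr]\bigr| \leq \e\bigl[|Z-m|\cdot|\tanh(Z) - p|\bigr] \leq 2L\,\e|\tanh(Z) - p|,
\]
using $|Z - m| \leq 2L$, yielding the stated bound with constant $4L \leq 20L$.

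The proof becomes essentially a routine calculation once one spots the sign-matching that converts the Taylor remainder into a covariance; this reinterpretation is the main (and rather mild) conceptual step. I do not anticipate any additional technical hurdles.
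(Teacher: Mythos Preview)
Your proof is correct and considerably simpler than the paper's. The paper proceeds by a variational argument: it substitutes $Y=\tanh Z$, fixes $\e|Y-a|$ and invokes an extremal-measure result (Winkler's theorem) to reduce to two-point distributions $\mu=p\delta_x+(1-p)\delta_y$, and then carries out a multi-case analysis in $p$, $x$, $y$ (treating the cases $p\to 0$, $p=\tfrac12$, and interior critical $p$ separately, with further sub-cases depending on signs and ratios such as $\frac{1-y}{1-x}$). Your route bypasses all of this: the integral Taylor remainder combined with the inequality $|\tanh''|\le 2\tanh'$ and the monotonicity of $\tanh$ turns the Jensen gap directly into $2\,\mathrm{Cov}(Z,\tanh Z)$, after which the bound $|Z-\e Z|\le 2L$ gives the constant $4L$ rather than $20L$. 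Your argument is shorter, avoids the external extremal-measure reference, and yields a sharper constant; the paper's approach would generalize more readily to functions without a convenient relation between first and second derivatives, but for $\tanh$ your proof is the cleaner one.
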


The proof is postponed to the appendix.
\begin{claim}
	\label{claim:exchange_expectation_of_product_measure}Let $\xi$ be
	a product measure on $\mathcal C_{n}$, let $Y\sim\xi$, and let $f:\mathcal C_{n}\to\bR$
	be a function on the hypercube. Then 
	\begin{equation}
	\e f\left(Y\right)=f\left(\e Y\right)\label{eq:expectation_switcharoo}
	\end{equation}
	and 
	\begin{equation}
	\e\grad f\left(Y\right)=\grad f\left(\e Y\right).\label{eq:expectation_switcharoo_gradient}
	\end{equation}
\end{claim}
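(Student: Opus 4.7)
The plan is to use the Fourier decomposition of $f$ together with the independence of the coordinates of $Y$. Every function on the hypercube admits a unique expansion
\[
f(x) = \sum_{S \subseteq [n]} \hat f(S) \prod_{i \in S} x_i,
\]
and the excerpt already records that this is precisely the harmonic extension used for $f: \overline{\mathcal C_n} \to \bR$.

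First, I would prove equation (\ref{eq:expectation_switcharoo}). Writing $Y = (Y_1, \dots, Y_n)$ with independent coordinates under the product measure $\xi$, linearity of expectation gives
\[
\e f(Y) = \sum_{S \subseteq [n]} \hat f(S) \, \e \prod_{i \in S} Y_i.
\]
By independence, $\e \prod_{i \in S} Y_i = \prod_{i \in S} \e Y_i$, so the right-hand side equals $\sum_S \hat f(S) \prod_{i \in S} (\e Y)_i$, which is exactly $f(\e Y)$ by definition of the harmonic extension.

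Next, I would bootstrap this to the gradient. For each coordinate $i$, the discrete partial derivative $\partial_i f$ is itself a function on $\mathcal C_n$ (in fact a multilinear polynomial in the remaining variables, not depending on $y_i$), so the first part of the claim applies to it, yielding $\e (\partial_i f)(Y) = (\partial_i f)(\e Y)$. Since the excerpt cites Fact 14 of \cite{2016_eldan_gaussian_width}, which guarantees that the harmonic extension of $\partial_i f$ coincides with the $i$-th real partial derivative of the harmonic extension of $f$, we get $(\partial_i f)(\e Y) = \partial_i (f \text{ extended})(\e Y)$. Assembling the coordinates proves (\ref{eq:expectation_switcharoo_gradient}).

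There is essentially no obstacle here — the only subtlety worth stating explicitly is that the two expressions on each side of (\ref{eq:expectation_switcharoo_gradient}) refer to different objects (discrete gradient averaged over $\mathcal C_n$ on the left, partial derivatives of the harmonic extension at a point of $\overline{\mathcal C_n}$ on the right), so the proof needs to invoke the consistency of these two notions as recorded in the Fourier section of the paper. Once that is observed, the argument is a one-line consequence of multilinearity and independence.
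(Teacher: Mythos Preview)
Your proposal is correct and follows essentially the same route as the paper: expand $f$ in its multilinear Fourier basis, use independence of the coordinates of $Y$ to push the expectation through the product, and then apply this coordinatewise to $\grad f$. The only difference is that you make explicit the consistency between the harmonic extension of $\partial_i f$ and the real partial derivative of the extension of $f$, which the paper's proof leaves implicit.
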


\begin{proof}
	The extension of $f$ to $\overline{\mathcal C_{n}}$ is defined by the
	Fourier decomposition
	\[
	f\left(y\right)=\sum_{S\subseteq\left[n\right]}\hat{f}\left(S\right)\prod_{i\in S}y_{i}.
	\]
	Thus, since $\xi$ is a product measure, 
	\[
	\e f\left(Y\right)=\e\sum_{S\subseteq\left[n\right]}\hat{f}\left(S\right)\prod_{i\in S}Y_{i}=\sum_{S\subseteq\left[n\right]}\hat{f}\left(S\right)\prod_{i\in S}\e Y_{i}=f\left(\e Y\right).
	\]
	Equation \ref{eq:expectation_switcharoo_gradient} is then obtained
	by applying equation \ref{eq:expectation_switcharoo} to every component
	of $\grad f$.
\end{proof}
\begin{proof}[Proof Proposition \ref{prop:main_proposition}]
	By the triangle inequality, 
	\begin{align}
	\onenorm{\tanh\left(\grad f\left(\e Y\right)\right)-\e Y} & \leq\onenorm{\tanh\left(\grad f\left(\e Y\right)\right)-\e\tanh\left(\grad f\left(Y\right)\right)}+\onenorm{\e\tanh\left(\grad f\left(Y\right)\right)-\e Y}\nonumber \\
	\left(\text{by Claim \ref{claim:exchange_expectation_of_product_measure} }\right) & =\onenorm{\tanh\left(\e\grad f\left(Y\right)\right)-\e\tanh\left(\grad f\left(Y\right)\right)}+\onenorm{\e\tanh\left(\grad f\left(Y\right)\right)-\e Y}\nonumber \\
	\left(\text{by convexity}\right) & \leq\onenorm{\tanh\left(\e\grad f\left(Y\right)\right)-\e\tanh\left(\grad f\left(Y\right)\right)}+\e\onenorm{\tanh\left(\grad f\left(Y\right)\right)-\e Y}.\label{eq:the_final_breakdown}
	\end{align}
	Proposition \ref{prop:part_one} gives a bound on the second term
	in the right hand side.
	
	For the first term, note that by equation (\ref{eq:bound_on_lipschitz}),
	for every index $j\in\left[n\right]$, 
	\[
	\abs{\grad f\left(Y\right)_{j}}\leq L_{1}.
	\]
	We can therefore invoke Lemma \ref{lem:variance_of_tanh} on every
	index, giving that
	\begin{align}
	\onenorm{\tanh\left(\e\grad f\left(Y\right)\right)-\e\tanh\left(\grad f\left(Y\right)\right)} & =\sum_{j=1}^{n}\abs{\tanh\left(\e\grad f\left(Y\right)\right)_{j}-\e\tanh\left(\grad f\left(Y\right)\right)_{j}}\nonumber \\
	\left(\text{by Lemma \ref{lem:variance_of_tanh}}\right) & \leq20L_{1}\sum_{j=1}^{n}\e\abs{\tanh\left(\grad f\left(Y\right)\right)_{j}-\e\tanh\left(\grad f\left(Y\right)_{j}\right)}\nonumber \\
	& =20L_{1}\e\onenorm{\tanh\left(\grad f\left(Y\right)\right)-\e\tanh\left(\grad f\left(Y\right)\right)}.\label{eq:almost_there}
	\end{align}
	For this last term, we again use the triangle inequality and equation
	(\ref{eq:component_1}), giving
	
	\begin{align*}
	\e\onenorm{\tanh\left(\grad f\left(Y\right)\right)-\e\tanh\left(\grad f\left(Y\right)\right)} & \leq\e\onenorm{\tanh\left(\grad f\left(Y\right)\right)-\e\tanh\left(\grad f\left(X\right)\right)}+\\
	& \,\,\,\,\,\,\,+\e\onenorm{\e\tanh\left(\grad f\left(X\right)\right)-\e\tanh\left(\grad f\left(Y\right)\right)}\\
	& \leq\eps n+\e\onenorm{\tanh\left(\grad f\left(Y\right)\right)-\e Y}+\\
	& \,\,\,\,\,\,\,+\e\onenorm{\tanh\left(\grad f\left(X\right)\right)-\tanh\left(\grad f\left(Y\right)\right)}
	\end{align*}
	which, by Proposition \ref{prop:part_one} and equation (\ref{eq:component_2}),
	gives
	\begin{align*}
	\e\onenorm{\tanh\left(\grad f\left(Y\right)\right)-\e\tanh\left(\grad f\left(Y\right)\right)} & \leq\eps n+64L_{2}\frac{n^{2/3}D^{1/3}}{\eps^{1/3}}+\eps n\\
	& \,\,\,\,\,\,\,+32L_{2}\frac{n^{2/3}D^{1/3}}{\eps^{1/3}}\\
	& \leq96L_{2}\frac{n^{2/3}D^{1/3}}{\eps^{1/3}}+2\eps n.
	\end{align*}
	Putting this into equation (\ref{eq:almost_there}), we have
	\begin{align*}
	\onenorm{\tanh\left(\e\grad f\left(Y\right)\right)-\e\tanh\left(\grad f\left(Y\right)\right)} & \leq40L_{1}\left(48L_{2}\frac{n^{2/3}D^{1/3}}{\eps^{1/3}}+\eps n\right).
	\end{align*}
	And finally, plugging in the bounds into equation (\ref{eq:the_final_breakdown}),
	we get
	\begin{align*}
	\onenorm{\tanh\left(\grad f\left(\e Y\right)\right)-\e Y} & \leq40L_{1}\left(48L_{2}\frac{n^{2/3}D^{1/3}}{\eps^{1/3}}+\eps n\right)\\
	& \,\,\,\,\,+64L_{2}\frac{n^{2/3}D^{1/3}}{\eps^{1/3}}+\eps n\\
	& \leq41L_{1}\left(112L_{2}\frac{n^{2/3}D^{1/3}}{\eps^{1/3}}+\eps n\right).
	\end{align*}
\end{proof}

\section{Proof of composition theorem\label{sec:composition_theorem}}

We will use two lemmas concerning the relation between $f$ and $h\circ f$.
The first is a discrete chain rule which will be central to our calculations:
\begin{lem}[Chain rule for discrete gradient]
	\label{lem:chain_rule}Let $f:\mathcal C_{n}\to\bR$ with $\text{Lip}\left(f\right)=L$
	and let $h:\bR\to\bR$ with $\abs{h''\left(x\right)}<B$. Then
\end{lem}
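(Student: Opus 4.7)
The approach is to reduce the chain rule to a coordinate-by-coordinate estimate and apply a second-order Taylor expansion of $h$, exploiting the structure of the discrete derivative to make one of the Taylor terms cancel exactly. Fix $i\in[n]$ and $y\in\mathcal{C}_n$, and set $a = f(y^{i,+1})$, $b = f(y^{i,-1})$; by Definition \ref{def:discrete_gradient},
\[
\partial_i(h\circ f)(y) = \tfrac{1}{2}(h(a)-h(b)), \qquad \partial_i f(y) = \tfrac{1}{2}(a-b),
\]
so the discrepancy of interest takes the form $\tfrac{1}{2}\bigl[h(a)-h(b)-h'(f(y))(a-b)\bigr]$.

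The key observation is that at a vertex of $\mathcal{C}_n$ the value $f(y)$ coincides with either $a$ or $b$ depending on the sign of $y_i$. If $y_i=+1$ then $f(y)=a$, and Taylor's theorem with Lagrange remainder applied to $h(b)$ about $a$ yields $h(b)=h(a)+h'(a)(b-a)+\tfrac{1}{2}h''(\xi)(b-a)^2$ for some $\xi$ between $a$ and $b$. Substituting, the linear terms cancel and the bracket collapses to $-\tfrac{1}{2}h''(\xi)(a-b)^2$. The case $y_i=-1$ is symmetric. Using $|h''|<B$ and $|a-b|=2|\partial_i f(y)|\leq 2L$, this gives the per-coordinate bound
\[
\bigl|\partial_i(h\circ f)(y)-h'(f(y))\,\partial_i f(y)\bigr| \leq BL^2
\]
for every $i\in[n]$ and every $y\in\mathcal{C}_n$, from which whichever of the $\ell_\infty$, $\ell_2$, or $\ell_1$ variants the lemma states follows by routine summation over coordinates.

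The only subtlety I anticipate is that later applications such as Theorem \ref{thm:composition_theorem} and Proposition \ref{prop:main_proposition} require the inequality at interior points of $\overline{\mathcal{C}_n}$ (most importantly at $\mathbb{E}Y$ for $Y$ drawn from a product measure), whereas the vertex inequality above does not lift automatically to the multilinear extension because $\widetilde{h\circ f}$ is \emph{not} the composition of the extensions of $h$ and $f$. I would handle this by invoking Claim \ref{claim:exchange_expectation_of_product_measure} to rewrite the interior evaluation as an expectation over a product measure and then bring the vertex-wise bound inside the expectation; the residual Jensen-type error between $\mathbb{E}[h'(f(Y))\nabla f(Y)]$ and $h'(f(\mathbb{E}Y))\,\nabla f(\mathbb{E}Y)$ is itself a second-order quantity controlled by $|h''|<B$. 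Beyond this transfer step, the argument is a short one-variable Taylor calculation whose cleanness comes entirely from the fact that the discrete derivative forces $f(y)\in\{a,b\}$ at a vertex, making one of the two Taylor remainders vanish.
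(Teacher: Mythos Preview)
Your proposal is correct and matches the paper's approach: the vertex case is exactly the one-variable Taylor expansion around $f(y)$ (which, as you observe, coincides with $a$ or $b$), giving the per-coordinate bound $BL^2$ and hence \eqref{eq:chain_rule_one_norm}--\eqref{eq:chain_rule_two_norm}. For the interior bound \eqref{eq:chain_rule_expectation} the paper does precisely what you anticipate---invoke Claim~\ref{claim:exchange_expectation_of_product_measure} and control the residual $\bigl\|\mathbb{E}\bigl[(h'(f(\mathbb{E}Y))-h'(f(Y)))\nabla f(Y)\bigr]\bigr\|_1$ via $|h''|\leq B$ together with the concentration estimate $\mathbb{E}|f(Y)-f(\mathbb{E}Y)|\leq L\sqrt{n}$ of Proposition~\ref{prop:variance_of_f}, which is the source of the extra $\sqrt{n}$ in the $2BL^2 n^{3/2}$ bound that your sketch leaves implicit.
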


\begin{enumerate}
	\item For ever $y\in\mathcal C_{n}$, 
	\begin{equation}
	\norm{\grad\left(h\circ f\right)\left(y\right)-h'\left(f\left(y\right)\right)\grad f\left(y\right)}_{1}\leq BL^{2}n\label{eq:chain_rule_one_norm}
	\end{equation}
	and
	\begin{equation}
	\norm{\grad\left(h\circ f\right)\left(y\right)-h'\left(f\left(y\right)\right)\grad f\left(y\right)}_{2}\leq BL^{2}\sqrt{n}.\label{eq:chain_rule_two_norm}
	\end{equation}
	\item For every $x\in\overline{\mathcal C_{n}}$, 
	\begin{equation}
	\norm{\grad\left(h\circ f\right)\left(x\right)-h'\left(f\left(x\right)\right)\grad f\left(x\right)}_{1}\leq2BL^{2}n^{3/2}.\label{eq:chain_rule_expectation}
	\end{equation}
\end{enumerate}
The second lemma concerns the parameters of the function $h\circ f$:
\begin{lem}[Composition parameters]
	\label{lem:composition_parameters}Let $h:\bR\to\bR$ be a twice differentiable
	function satisfying 
	\begin{align*}
	\abs{h'\left(x\right)} & \leq B_{1}\\
	\abs{h''\left(x\right)} & \leq B_{2}
	\end{align*}
	for all $x\in\bR$. Let $f:\mathcal C_{n}\to\bR$ be a function with parameters
	$D$, $L_{1}$, $L_{2}$ as described in Theorem \ref{thm:main_theorem}.
	Then
	\begin{align}
	\mathcal D\left(h\circ f\right) & \leq B_{1}D+B_{2}L_{1}^{2}n\label{eq:composition_parameter_1}\\
	\text{Lip}\left(h\circ f\right) & \leq B_{1}L_{1}\label{eq:composition_parameter_2}\\
	\max_{x\neq y\in\mathcal C_{n}}\frac{\onenorm{\grad\left(h\circ f\right)\left(x\right)-\grad\left(h\circ f\right)\left(y\right)}}{\onenorm{x-y}} & \leq B_{1}L_{2}+3B_{2}L_{1}^{2}n.\label{eq:composition_parameter_3}
	\end{align}
	
\end{lem}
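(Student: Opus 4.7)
The proof splits cleanly into three parts, one per displayed bound. Parts (\ref{eq:composition_parameter_2}) and (\ref{eq:composition_parameter_3}) are routine consequences of the chain rule Lemma~\ref{lem:chain_rule}, whereas part (\ref{eq:composition_parameter_1}), the gradient-complexity bound, is the real source of difficulty.

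For the pointwise Lipschitz bound (\ref{eq:composition_parameter_2}), I would apply the one-dimensional mean value theorem directly to the definition of the discrete derivative:
\[
\partial_i(h\circ f)(y)=\tfrac{1}{2}\bigl[h(f(y_{+i}))-h(f(y_{-i}))\bigr]=h'(\xi)\,\partial_i f(y)
\]
for some intermediate $\xi$, and then bound $|h'(\xi)|\leq B_1$ and $|\partial_i f(y)|\leq L_1$.

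For the discrete gradient-Lipschitz bound (\ref{eq:composition_parameter_3}), I would apply Lemma~\ref{lem:chain_rule} at both $x$ and $y$ to replace $\grad(h\circ f)(\cdot)$ by $h'(f(\cdot))\grad f(\cdot)$ at a $1$-norm cost of $B_2 L_1^2 n$ per point, and then decompose the main term via the product rule
\[
h'(f(x))\grad f(x)-h'(f(y))\grad f(y)=h'(f(x))\bigl(\grad f(x)-\grad f(y)\bigr)+\bigl(h'(f(x))-h'(f(y))\bigr)\grad f(y).
\]
The first summand is bounded using $|h'|\leq B_1$ and the discrete Lipschitz constant $L_2$ of $\grad f$; the second uses $|h'(a)-h'(b)|\leq B_2|a-b|$, the fact that $f$ is $L_1$-Lipschitz in the $\ell_1$ metric on $\mathcal C_n$ (so $|f(x)-f(y)|\leq L_1\onenorm{x-y}$), and $\onenorm{\grad f(y)}\leq L_1 n$. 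Dividing by $\onenorm{x-y}\geq 2$ (for $x\neq y$ in $\mathcal C_n$) and absorbing the two chain-rule error terms yields the claimed bound.

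The main difficulty is bound (\ref{eq:composition_parameter_1}). Here I would compare the target set $K_0=\{\grad(h\circ f)(y):y\in\mathcal C_n\}\cup\{0\}$ with the approximating set $K_1=\{h'(f(y))\grad f(y):y\in\mathcal C_n\}\cup\{0\}$. By the $2$-norm version (\ref{eq:chain_rule_two_norm}) of Lemma~\ref{lem:chain_rule}, each point of $K_0$ lies within Euclidean distance $B_2 L_1^2\sqrt{n}$ of its companion in $K_1$; a standard perturbation of Gaussian width (using $\e\twonorm{\Gamma}\leq\sqrt{n}$) then gives $\boldsymbol{\mathrm{GW}}(K_0)\leq\boldsymbol{\mathrm{GW}}(K_1)+B_2 L_1^2 n$. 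For $\boldsymbol{\mathrm{GW}}(K_1)$, the plan is to use $|h'(f(y))|\leq B_1$ to bound each linear functional $\langle h'(f(y))\grad f(y),\Gamma\rangle$ by $B_1$ times a signed value from $\{\langle\pm\grad f(y),\Gamma\rangle\}$, and then exploit the symmetry $-\Gamma\sim\Gamma$ in law together with the inclusion $0\in\{\grad f(y)\}\cup\{0\}$ to reduce back to $B_1 D$. The delicate point is handling the possibly variable sign of $h'(f(y))$ cleanly without picking up extra constant factors; this is where the Gaussian symmetry together with the presence of $0$ in the defining set is essential.
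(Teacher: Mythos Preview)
Your proposal follows the paper's proof essentially line for line: the mean value theorem for (\ref{eq:composition_parameter_2}), the chain-rule approximation (\ref{eq:chain_rule_one_norm}) at both points plus the product-rule splitting for (\ref{eq:composition_parameter_3}), and the $\ell_2$ chain-rule bound (\ref{eq:chain_rule_two_norm}) together with $|h'|\le B_1$ and $\e\twonorm{\Gamma}\le\sqrt n$ for (\ref{eq:composition_parameter_1}). The sign issue you flag as delicate is dispatched in the paper by the single pointwise inequality $\sup_{u\in\mathcal A_h}\langle u,v\rangle\le\max\bigl(0,\,B_1\sup_{u\in\mathcal A_f}\langle u,v\rangle\bigr)+\sqrt n\,B_2 L_1^2\|v\|_2$, which is itself somewhat brisk on exactly the point you identify; your instinct that this is where the care is needed is correct.
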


The proofs of both lemmas are postponed to the appendix. 
\begin{proof}[Proof of Theorem \ref{thm:composition_theorem}]
	Denote by $\mathcal X_{h\circ f}$ the set 
	\[
	\mathcal{\mathcal X}_{h\circ f}=\left\{ X\in\left[-1,1\right]^{n}:\onenorm{X-\tanh\left(\grad\left(h\circ f\right)\left(X\right)\right)}\leq5000\tilde{L}_{1}\tilde{L}_{2}^{3/4}\tilde{D}^{1/4}n^{3/4}\right\} .
	\]
	Note that for every $X\in\mathcal X_{h\circ f}$,
	
	\begin{align*}
	\onenorm{X-\tanh\left(h'\left(X\right)\grad f\left(X\right)\right)} & \leq\onenorm{X-\tanh\left(\grad\left(h\circ f\right)\left(X\right)\right)}\\
	& +\,\,\,\,\,\,\,\,\,\,\onenorm{\tanh\left(\grad\left(h\circ f\right)\left(X\right)\right)-\tanh\left(h'\left(X\right)\grad f\left(X\right)\right)}\\
	\left(\text{by equation (\ref{eq:chain_rule_expectation})}\right) & \leq5000\tilde{L}_{1}\tilde{L}_{2}^{3/4}\tilde{D}^{1/4}n^{3/4}+2B_{2}L_{1}^{2}n^{3/2}
	\end{align*}
	and so $\tilde{\mathcal X}_{h\circ f}\subseteq\mathcal X_{h\circ f}$. Applying
	Theorem \ref{thm:main_theorem} for the function $h\circ f$ with
	the bounds given by Lemma \ref{lem:composition_parameters} gives
	the required results. 
\end{proof}
\begin{rem}
	\label{rem:tightness_of_extension_expectation_bound}The bound for
	compositions $h\circ f$ with domain $\overline{C_{n}}$, given in
	(\ref{eq:chain_rule_expectation}), is worse by a factor of $\sqrt{n}$
	than that of compositions with domain $\mathcal{C}_{n}$, given in
	(\ref{eq:chain_rule_one_norm}). This disparity is in fact tight.
	For example, consider the function 
	\[
	h\left(x\right)=\begin{cases}
	\frac{3}{4}x^{3}-\frac{1}{4}x^{5} & \abs x<1\\
	\frac{1}{2}x^{2} & x\geq1\\
	-\frac{1}{2}x^{2} & x\leq-1
	\end{cases}
	\]
	applied to the ``counting'' function 
	\[
	f\left(x\right)=\sum_{i=1}^{n}x_{i}.
	\]
	The function $h$ has a bounded second derivative and satisfies $h'\left(0\right)=0$.
	For $x=0$, we have $f\left(x\right)=0$ and so $h'\left(f\left(x\right)\right)\grad f\left(x\right)=0$
	as well. However, a calculation shows that $\norm{\grad\left(h\circ f\right)\left(x\right)}_{1}\sim n^{3/2}$,
	and so $\norm{\grad\left(h\circ f\right)\left(x\right)-h'\left(f\left(x\right)\right)\grad f\left(x\right)}_{1}\sim n^{3/2}$
	as well. 
\end{rem}

\section{Example applications\label{sec:applications}}

\subsection{The Ising model\label{subsec:ising_proof}}
\begin{proof}[Proof of Corollary \ref{cor:general_ising}]
	A short calculation shows that $\grad f\left(x\right)=Ax+\mu$. The
	corollary will follow immediately from Theorem \ref{thm:main_theorem}
	once we have obtained the parameters $D,$ $L_{1}$ and $L_{2}$ for
	$f$. The calculations for $\mathcal D\left(f\right)$ and $\text{Lip}$$\left(f\right)$
	are also found in \cite[Section 1.3]{2016_eldan_gaussian_width} but
	we repeat them here for completeness.
	
	Denote $\mu_{\max}=\max_{i\in\left[n\right]}\abs{\mu_{i}}$. We then
	have
	\begin{enumerate}
		\item The Gaussian-width is bounded by: 
		\begin{align*}
		\mathcal D\left(f\right) & =\e\sup_{x\in\mathcal C_{n}}\left\langle Ax+\mu,\Gamma\right\rangle \\
		& \leq\e\sup_{x\in\mathcal C_{n}}\left\langle Ax,\Gamma\right\rangle +\e\abs{\left\langle \mu,\Gamma\right\rangle }\\
		& \leq\sqrt{n}\e\sup_{x\in B\left(0,1\right)}\left\langle Ax,\Gamma\right\rangle +\norm{\mu}_{2}\\
		& =\sqrt{n}\e\norm{A\Gamma}_{2}+\twonorm{\mu}\\
		& \leq\sqrt{n\e\norm{A\Gamma}_{2}^{2}}+\twonorm{\mu}\\
		& \leq\sqrt{n\tr A^{2}}+\sqrt{n}\mu_{\max}.
		\end{align*}
		\item The Lipschitz constant is bounded by
		\begin{align*}
		\text{Lip}\left(f\right) & \leq\mu_{\max}+\max_{i\in\left[n\right],x\in\mathcal C_{n}}\left\langle Ax,e_{i}\right\rangle \\
		& \leq\mu_{\max}+\max_{i\in\left[n\right]}\sum_{j\in\left[n\right]}\abs{A_{ij}}.
		\end{align*}
		\item Regarding the Lipschitz constant of the gradient, note that $\onenorm{\grad f\left(x\right)-\grad f\left(y\right)}=\onenorm{A\left(x-y\right)}$.
		Suppose that $x$ and $y$ differ only in the $i$-th coordinate.
		Then $A\left(\abs{x-y}\right)$ is just 2 times the $i$-th column
		of $A$. By the triangle inequality, we then have
		\[
		\frac{\onenorm{\grad f\left(x\right)-\grad f\left(y\right)}}{\onenorm{x-y}}\leq\max_{i\in\left[n\right]}\sum_{j\in\left[n\right]}\abs{A_{ij}}.
		\]
	\end{enumerate}
\end{proof}
\begin{proof}[Proof of Corollary \ref{cor:curie_wiess_ising}]
	The interactions described in Corollary \ref{cor:curie_wiess_ising}
	can be represented by an interaction matrix $A=\frac{\beta\one}{n}$,
	where $\one$ is the $n\times n$ matrix whose off-diagonal entries
	are $1$ and whose diagonal is $0$, and $\beta$ is interpreted as
	the inverse temperature. Note that for every $x,y\in\mathcal C_{n}$,
	\begin{equation}
	\onenorm{\grad f\left(x\right)-\grad f\left(y\right)}=\onenorm{A\left(x-y\right)}\leq\beta\onenorm{x-y},\label{eq:curry_weiss_contracting}
	\end{equation}
	so that $L_{2}\leq1+\beta$. A simple calculation also shows that
	$D\leq\beta\sqrt{n}$ and $L_{1}\leq1+\beta$. Denoting 
	\[
	\mathcal X=\left\{ X\in\overline{\mathcal C_{n}}:\onenorm{X-\tanh\left(\frac{\beta\one}{n}X\right)}\leq5000\left(1+\beta\right)^{2}n^{7/8}\right\} ,
	\]
	by Corollary \ref{cor:general_ising} we have that $X_{n}^{f}$ is
	a $\left(\rho,3n^{-1/8},3n^{-1/8}\right)$-tilt-mixture with $\rho\left(\mathcal X\right)\geq1-3n^{-1/8}$.
	Denote by $\boldsymbol{J}=\one+\textrm{Id}$ the $n\times n$ matrix
	whose every entry is $1$. Then every $X\in\mathcal X$ also satisfies
	\begin{align*}
	\onenorm{X-\tanh\left(\frac{\beta\boldsymbol{J}}{n}X\right)} & =\onenorm{X-\tanh\left(\frac{\beta\boldsymbol{J}}{n}X\right)-\tanh\left(\frac{\beta\one}{n}X\right)+\tanh\left(\frac{\beta\one}{n}X\right)}\\
	& \leq5000\left(1+\beta\right)^{2}n^{7/8}+\onenorm{\tanh\left(\frac{\beta\one}{n}X\right)-\tanh\left(\frac{\beta\left(\one+\textrm{Id}\right)}{n}X\right)}\\
	& \leq5000\left(1+\beta\right)^{2}\left(1+\beta\right)n^{7/8}+\onenorm{\frac{\beta\textrm{Id}}{n}X}\\
	& \leq5001\left(1+\beta\right)^{2}n^{7/8}.
	\end{align*}
	Thus $\mathcal X\subseteq\mathcal X_{f}$ and the first part of Corollary
	\ref{cor:curie_wiess_ising} is proved. The fixed point equation $X=\tanh\left(\frac{\beta\boldsymbol{J}}{n}X\right)$
	is easier to work with, since all of its exact solutions are constant:
	Indeed, every entry $X_{i}$ of a solution satisfies $X_{i}=\tanh\left(\sum_{j=1}^{n}\frac{\beta}{n}X_{j}\right)$;
	every solution $X$ is then of the form $X=\left(x,x,\ldots,x\right)$,
	and the exact fixed point vector equation reduces to the scalar equation
	\[
	x=\tanh\left(\beta x\right).
	\]
	The value $x_{0}=0$ is always a solution, corresponding to the case
	where the typical configuration is completely disordered. 
	
	For $\beta\leq1$, this is also the only solution. In this case, for
	every $X\in\mathcal X_{f}$,
	\begin{align*}
	\onenorm X & =\onenorm{X-\tanh\left(\frac{\beta\boldsymbol{J}}{n}X\right)+\tanh\left(\frac{\beta\boldsymbol{J}}{n}X\right)}\\
	& \leq5001\left(1+\beta\right)^{2}n^{7/8}+\onenorm{\tanh\left(\frac{\beta\boldsymbol{J}}{n}X\right)}\\
	& \leq5001\left(1+\beta\right)^{2}n^{7/8}+\beta\onenorm X.
	\end{align*}
	Rearranging, we get that every $X\in\mathcal X_{f}$ is close to $0$:
	\[
	\onenorm X\leq5001\frac{\left(1+\beta\right)^{2}}{1-\beta}n^{7/8}.
	\]
	This represents the fact that for high temperatures, the system is
	always disordered.
	
	For $\beta>1$, there are two other solutions, $x_{1}=-x_{2}$. These
	satisfy $\abs{x_{1}},\abs{x_{2}}\to1$ as $\beta\goinf$, and correspond
	to the symmetry-broken phase where all spins tend to point in the
	same direction. Showing that every $X\in\mathcal X_{f}$ is close to either
	$\left(x_{1},x_{1},\ldots,x_{1}\right)$ or $\left(x_{2},x_{2},\ldots,x_{2}\right)$
	can then be done by a standard counting argument, which we choose
	to omit.
\end{proof}
Adding a constant magnetic field $\mu=\left(\mu_{0},\mu_{0}\ldots,\mu_{0}\right)$
forces a non-zero constant solution for every $\beta>0$, while shifting
the values of $x_{1}$ and $x_{2}$.

\subsection{Large deviations\label{subsec:large_deviations_proof}}

In order to prove Theorem \ref{thm:large_deviations}, we follow the
approach mentioned in Section \ref{subsec:large_deviations_intro_section},
and try to approximate function $\tilde{f}$ in equation (\ref{eq:the_sharpest_cutoff})
by a well-behaved Hamiltonian $g$. 

Let $t\in\bR$ and $\delta>0$. Let $h:\bR\to\bR$ and $\psi:\bR\to\bR$
be defined as 

\[
h\left(x\right)=\begin{cases}
2x+1 & x\leq-1\\
-x^{2} & -1\leq x\leq0\\
0 & x\geq0.
\end{cases}
\]
and
\[
\psi\left(x\right)=n\cdot h\left(\left(\frac{x}{n}-t\right)/\delta\right).
\]
Note that $\abs{h'\left(x\right)}\leq2$ and $\abs{h''\left(x\right)}\leq2$
for all $x\in\bR$. Thus 
\begin{align*}
\abs{\psi'\left(x\right)} & =\abs{n\cdot h'\left(\left(\frac{x}{n}-t\right)/\delta\right)\cdot\frac{1}{n\delta}}\\
& \leq\frac{2}{\delta}
\end{align*}
and 
\begin{align*}
\abs{\psi''\left(x\right)} & =\abs{n\cdot h''\left(\left(\frac{x}{n}-t\right)/\delta\right)\cdot\frac{1}{n^{2}\delta^{2}}}\\
& \leq\frac{2}{n\delta^{2}}.
\end{align*}
Let $g:\mathcal C_{n}\to\bR$ be defined as 
\[
g\left(y\right)=\psi\left(f\left(y\right)\right).
\]
Denote by $\nu$ the measure defined by $X_{n}^{g}$. The function
$g$ is an approximation for $\tilde{f}$, in the sense that almost
of all the mass of $\nu$ is supported on vectors on which $f$ attains
a large value. 
\begin{prop}
	\label{prop:small_tail}Let $\delta'=\frac{\log4+1}{2}\delta$ and
	define 
	\[
	\mathcal B=\left\{ y\in\mathcal C_{n}:\,f\left(y\right)\leq\left(t-\delta'\right)n\right\} .
	\]
	If there exists a $z\in\mathcal C_{n}$ such that $f\left(z\right)\geq tn$,
	then 
	\[
	\nu\left(\mathcal B\right)\leq2^{-n}.
	\]
\end{prop}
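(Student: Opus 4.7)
The proof is essentially a direct computation once one unpacks the definition of $g$ and the specific shape of the smoothing function $h$. The plan is to show that on $\mathcal B$ the Hamiltonian $g$ takes a sufficiently large negative value that the exponential weights are crushed, while the existence of $z$ with $f(z)\geq tn$ provides a trivial positive lower bound on the partition sum.

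First I would observe that $\delta'=\tfrac{\log 4+1}{2}\delta > \delta$, so every $y\in\mathcal B$ satisfies $(f(y)/n - t)/\delta \leq -\delta'/\delta \leq -1$. This puts the argument of $h$ inside the linear branch $h(u)=2u+1$, so
\[
g(y) \;=\; n\cdot h\!\left(\tfrac{f(y)/n - t}{\delta}\right) \;=\; \tfrac{2(f(y)-tn)}{\delta} + n \;\leq\; -\tfrac{2\delta'}{\delta}\,n + n \;=\; n\bigl(1 - (\log 4 + 1)\bigr) \;=\; -n\log 4.
\]
Hence $e^{g(y)}\leq 4^{-n}$ for every $y\in\mathcal B$.

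Next I would lower-bound the partition function. Writing $Z=\sum_{y\in\mathcal C_n} e^{g(y)}$, the assumption that some $z\in\mathcal C_n$ satisfies $f(z)\geq tn$ places $(f(z)/n-t)/\delta \geq 0$, so $h$ evaluates to $0$ and $g(z)=0$. Thus $Z\geq e^{g(z)}=1$.

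Combining these two bounds with the crude estimate $|\mathcal B|\leq 2^n$ gives
\[
\nu(\mathcal B) \;=\; \frac{\sum_{y\in\mathcal B} e^{g(y)}}{Z} \;\leq\; \frac{2^n \cdot 4^{-n}}{1} \;=\; 2^{-n},
\]
which is the claimed bound. There is no real obstacle here; the only step requiring care is tracking the normalization of the Gibbs measure relative to the counting measure versus the uniform measure $\mu$, but both conventions give the same ratio so the bound is unaffected.
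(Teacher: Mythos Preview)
Your proof is correct and essentially identical to the paper's: both show $g(y)\leq -n\log 4$ on $\mathcal B$ (the paper via monotonicity of $h$, you by directly identifying the linear branch, which amounts to the same thing), lower-bound the partition function by $e^{g(z)}=1$, and combine with $|\mathcal B|\leq 2^n$.
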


\begin{proof}
	Let $y\in\mathcal B$. By definition of $g$, 
	\begin{align*}
	g\left(y\right) & =n\cdot h\left(\left(\frac{f\left(y\right)}{n}-t\right)/\delta\right)\\
	\left(\text{\ensuremath{h} is increasing}\right) & \leq n\cdot h\left(\left(\frac{\left(t-\delta'\right)n}{n}-t\right)/\delta\right)\\
	& =n\cdot h\left(-\delta'/\delta\right)\\
	\left(\text{since \ensuremath{\delta'>\delta}}\right) & =n\left(1-2\frac{\delta'}{\delta}\right)\\
	& =-\left(\log4\right)n.
	\end{align*}
	Let $z\in\mathcal C_{n}$ be such that $f\left(z\right)\geq tn$. Then
	under $\nu$ the probability for obtaining $z$ is proportional to
	$e^{g\left(z\right)}=e^{0}=1$. On the other hand, for every $y\in\mathcal B$,
	the probability for obtaining $y$ is proportional to a value smaller
	than $e^{-\log4\cdot n}=4^{-n}=2^{-2n}$. Since there are no more
	than $2^{n}$ possible vectors in $\mathcal C_{n}$, we thus obtain
	\[
	\nu\left(\mathcal B\right)\leq\frac{\nu\left(\mathcal B\right)}{\nu\left(z\right)}\leq2^{n}2^{-2n}=2^{-n}.
	\]
\end{proof}
Proposition \ref{prop:small_tail} allows us to approximate $\nu$
with a distribution that does not give any mass at all to vectors
$y\in\mathcal C_{n}$ with $f\left(y\right)<\left(t-\delta'\right)n$.
Define the function $\varphi:\mathcal C_{n}\to\bR$ by

\[
\varphi\left(y\right)=\begin{cases}
0 & f\left(y\right)<\left(t-\delta'\right)n\\
e^{g\left(y\right)} & \left(t-\delta'\right)n\leq f\left(y\right)<tn\\
1 & f\left(y\right)\geq tn,
\end{cases}
\]
and observe that $\varphi\left(y\right)$ agrees with $e^{g\left(y\right)}$
for all $y$ such that $f\left(y\right)\geq\left(t-\delta'\right)n$.
Denote by $\sigma$ the measure defined by $d\sigma=\frac{\varphi d\mu}{\int_{\mathcal C_{n}}\varphi d\mu}$
and by $X_{\varphi}$ a random variable whose law is $\sigma$.
\begin{prop}
	\label{prop:phi_and_g_are_close}Assume that there exists a $z\in\mathcal C_{n}$
	such that $f\left(z\right)\geq tn$. Then there exists a coupling
	between $X_{n}^{g}$ and $X_{\varphi}$ such that 
	\[
	\e\onenorm{X_{n}^{g}-X_{\varphi}}\leq2n\cdot2^{-n}.
	\]
\end{prop}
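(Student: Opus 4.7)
The plan is to exhibit a maximal coupling of $X_n^g$ and $X_\varphi$ and then use the fact that on the hypercube $\mathcal{C}_n$ any two vectors are at $\ell^1$-distance at most $2n$.

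First I would observe that $\varphi$ agrees with $e^{g}$ on the ``good'' region. Indeed, if $f(y) \geq tn$ then $g(y) = 0$ so $e^{g(y)} = 1 = \varphi(y)$, while on $(t-\delta')n \leq f(y) < tn$ the definition gives $\varphi(y) = e^{g(y)}$ directly. Hence $\varphi(y) = e^{g(y)}$ for all $y \notin \mathcal{B}'$, where $\mathcal{B}' = \{y : \varphi(y) = 0\} \subseteq \mathcal{B}$, and $\varphi(y) = 0$ for $y \in \mathcal{B}'$.

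Next I would estimate the total variation distance. Writing $Z_g = \int_{\mathcal{C}_n} e^g\,d\mu$ and $Z_\varphi = \int_{\mathcal{C}_n} \varphi\,d\mu$, the identity $\varphi = e^g \cdot \mathbf{1}_{\mathcal{C}_n \setminus \mathcal{B}'}$ gives $Z_g - Z_\varphi = \int_{\mathcal{B}'} e^g\,d\mu = Z_g\,\nu(\mathcal{B}')$, so $Z_\varphi/Z_g = 1 - \nu(\mathcal{B}') \geq 1 - 2^{-n}$ by Proposition \ref{prop:small_tail}. Decomposing the $\ell^1$ difference of densities gives
\[
\sum_{y \in \mathcal{B}'} |\nu(y) - \sigma(y)| = \nu(\mathcal{B}'), \qquad \sum_{y \notin \mathcal{B}'} |\nu(y) - \sigma(y)| = \sum_{y \notin \mathcal{B}'} e^{g(y)}\Bigl(\tfrac{1}{Z_\varphi} - \tfrac{1}{Z_g}\Bigr) = 1 - \tfrac{Z_\varphi}{Z_g} = \nu(\mathcal{B}'),
\]
so $\|\nu - \sigma\|_{\mathrm{TV}} = \nu(\mathcal{B}') \leq 2^{-n}$.

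Finally I would invoke the standard maximal coupling theorem: there exists a coupling $(X_n^g, X_\varphi)$ such that $\Pr[X_n^g \neq X_\varphi] = \|\nu - \sigma\|_{\mathrm{TV}} \leq 2^{-n}$. Since $\onenorm{x - y} \leq 2n$ for any $x, y \in \mathcal{C}_n$, this yields
\[
\e\onenorm{X_n^g - X_\varphi} \leq 2n \cdot \Pr[X_n^g \neq X_\varphi] \leq 2n \cdot 2^{-n},
\]
as required. There is no real obstacle here; the only point needing care is to keep the normalizing constants straight and to ensure the set on which $\varphi$ vanishes is contained in $\mathcal{B}$ so that Proposition \ref{prop:small_tail} applies directly.
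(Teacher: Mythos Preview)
Your proof is correct and follows the same route as the paper: bound the total variation distance between $\nu$ and $\sigma$ via Proposition~\ref{prop:small_tail}, then pass to a maximal coupling and use $\onenorm{x-y}\le 2n$ on $\mathcal C_n$. Your computation of the total variation is in fact cleaner than the paper's, since you recognize $\sigma$ as $\nu$ conditioned on $\mathcal C_n\setminus\mathcal B'$ and obtain $\|\nu-\sigma\|_{\mathrm{TV}}=\nu(\mathcal B')\le 2^{-n}$ exactly, whereas the paper bounds the two pieces separately and arrives at the slightly weaker $\mathrm{TV}\le 2\cdot 2^{-n}$.
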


We postpone the proof to the appendix.
\begin{proof}[Proof of Theorem \ref{thm:large_deviations}]
	Applying Theorem \ref{thm:composition_theorem} to $g$, there exists
	a $\rho$-mixture and a coupling between $X\left(\rho\right)$ and
	$X_{n}^{g}$ such that 
	\begin{equation}
	\rho\left(\mathcal X_{g}\right)\geq1-\frac{80\tilde{D}^{1/4}}{n^{1/4}}\label{eq:middleway_probability}
	\end{equation}
	and
	\[
	\e\onenorm{X\left(\rho\right)-X_{n}^{g}}\leq80n^{3/4}\tilde{D}^{1/4}.
	\]
	Therefore by Proposition \ref{prop:phi_and_g_are_close} there exists
	a coupling between $X\left(\rho\right)$ and $X_{\varphi}$ such that
	\[
	\e\onenorm{X\left(\rho\right)-X_{\varphi}}\leq80n^{3/4}\tilde{D}^{1/4}+2n\cdot2^{-n}.
	\]
	This shows that $X_{\varphi}$ is a $\left(\rho,80\frac{\tilde{D}^{1/4}}{n^{1/4}}+8\cdot2^{-n}\right)$-mixture.
	To obtain equation (\ref{eq:large_deviation_probability_statement}),
	denote $\mathcal Y_{g}=\left\{ X\in\mathcal X_{g}:\,f\left(X\right)<\left(t-3\delta'\right)n\right\} $,
	and let $X\in\mathcal Y_{g}$. Denote by $\xi_{X}$ the product measure
	on $\mathcal C_{n}$ such that if $Y_{X}\sim\xi_{X}$ then $\e Y_{X}=X$.
	We then have
	\begin{align}
	\pr{f\left(Y_{X}\right)\geq\left(t-2\delta'\right)n} & \leq\pr{f\left(Y_{X}\right)\geq f\left(\e Y_{X}\right)+\delta'n}\nonumber \\
	& \leq\pr{\abs{f\left(Y_{X}\right)-f\left(\e Y_{X}\right)}\geq\delta'n}\nonumber \\
	\left(\text{by Markov's inequality}\right) & \leq\frac{\e\abs{f\left(Y_{X}\right)-f\left(\e Y_{X}\right)}}{\delta'n}\nonumber \\
	\left(\text{by Proposition \ref{prop:variance_of_f} }\right) & \leq\frac{L_{1}}{\delta'\sqrt{n}}.\label{eq:above_2_threshold}
	\end{align}
	Denote by $\mathcal A_{X}$ the event 
	\[
	\mathcal A_{X}=\left\{ f\left(Y_{X}\right)<\left(t-2\delta'\right)n\right\} \intersect\left\{ f\left(X_{n}^{g}\right)>\left(t-\delta'\right)n\right\} .
	\]
	Equation (\ref{eq:above_2_threshold}) and Proposition \ref{prop:small_tail}
	together imply that
	\[
	\pr{\mathcal A_{X}}\geq1-\frac{L_{1}}{\delta'\sqrt{n}}-2^{-n}.
	\]
	Under $\mathcal A_{X}$ we have that
	
	\[
	\delta'n\leq f\left(X_{n}^{g}\right)-f\left(Y_{X}\right)\leq L_{1}\onenorm{Y_{X}-X_{n}^{g}},
	\]
	yielding
	\begin{equation}
	\onenorm{Y_{X}-X_{n}^{g}}\geq\frac{\delta'n}{L_{1}}.\label{eq:not_too_sharp}
	\end{equation}
	Since $\e\onenorm{X\left(\rho\right)-X_{n}^{g}}$ is small, this inequality
	sets a constraint on the measure of $\mathcal Y_{g}$. Letting $Z$ be
	a random variables with law $\rho$, coupled with $X\left(\rho\right)$
	so that $X\left(\rho\right)\mid Z\sim Y_{Z}$, one has
	
	\begin{align*}
	\e\onenorm{X\left(\rho\right)-X_{n}^{g}} & =\int_{\overline{\mathcal C_{n}}}\e\left[\onenorm{Y_{Z}-X_{n}^{g}}\mid Z\right]d\rho\left(Z\right)\\
	& \geq\int_{\mathcal Y_{g}}\e\left[\onenorm{Y_{Z}-X_{n}^{g}}\mid Z\right]d\rho\left(Z\right)\\
	& \geq\int_{\mathcal Y_{g}}\e\left[\onenorm{Y_{Z}-X_{n}^{g}}\mid Z\land\mathcal A_{Z}\right]\pr{\mathcal A_{Z}}d\rho\left(Z\right)\\
	\left(\text{by equation (\ref{eq:not_too_sharp})}\right) & \geq\left(1-\frac{L_{1}}{\delta'\sqrt{n}}-2^{-n}\right)\int_{\mathcal Y_{g}}\frac{\delta'n}{L_{1}}d\rho\left(Z\right)\\
	& =\left(1-\frac{L_{1}}{\delta'\sqrt{n}}-2^{-n}\right)\rho\left(\mathcal Y_{g}\right)\frac{\delta'n}{2L_{1}}.
	\end{align*}
	We thus obtain
	\[
	\rho\left(\mathcal Y_{g}\right)\leq\frac{80L_{1}\tilde{D}^{1/4}}{n^{1/4}\delta'}\left(1-\frac{L_{1}}{\delta'\sqrt{n}}-2^{-n}\right)^{-1}.
	\]
	Together with equation (\ref{eq:middleway_probability}), this gives
	\[
	\rho\left(\mathcal X_{g}\backslash\mathcal Y_{g}\right)\geq1-\frac{161L_{1}\tilde{D}^{1/4}}{n^{1/4}\delta'}\left(1-\frac{L_{1}}{\delta'\sqrt{n}}-2^{-n}\right)^{-1}
	\]
	as needed.
\end{proof}
\begin{rem}
	A particular type of Hamiltonian that has been of considerable interest
	in the field of large deviations that of subgraph-counting functions.
	It was recently shown in \cite{eldan_gross_exponential_random_graphs}
	that for these types of Hamiltonians, $\grad f\left(X\right)$ is
	close to a stochastic block matrix. Since $h'\left(\left(\frac{f\left(X\right)}{n}-t\right)/\delta\right)$
	is a scalar, this implies that every $X\in\mathcal X_{g}$ is also close
	to a stochastic block matrix. 
\end{rem}

\begin{rem}
	Theorem \ref{thm:large_deviations} corresponds to the unconditioned
	distribution $\mu_{p}$ with $p=1/2$. To deal with the case $p\neq1/2$,
	define $g\left(y\right)$ as 
	\[
	g\left(y\right)=\psi\left(f\left(y\right)\right)+\prod\log\left(\frac{1}{2}\left(1-y_{i}+2py_{i}\right)\right).
	\]
	Analogues of Propositions \ref{prop:small_tail} and \ref{prop:phi_and_g_are_close}
	can then be proved following the same line.
\end{rem}

\section{Acknowledgments}
The first author is grateful to Sourav Chatterjee for inspiring him
to work on this topic and for an enlightening discussion. We thank
Amir Dembo and Sumit Mukherjee for insightful discussions, and Yufei Zhao for 
his motivating comments on sparse bounds. Finally we thank the anonymous 
referee for comments improving the presentation	of this work.

\addcontentsline{toc}{section}{\protect\numberline{}References}

\bibliographystyle{plain}
\bibliography{large_bibliography}

\appendix

\section{Appendix}

 \begin{proof}[Proof of Lemma \ref{lem:variance_of_tanh}]
 	
 	Denote $Y=\tanh Z$, and denote the bound of $Y$ by $\alpha=\tanh L\geq\tanh\left(1\right)$.
 	Under this notation, we wish to show that 
 	
 	\begin{equation}
 	\abs{\tanh\left(\e\tanh^{-1}Y\right)-\e Y}\leq20\tanh^{-1}\left(\alpha\right)\cdot\e\abs{Y-\e Y}.\label{eq:what_we_want_to_show_under_Y}
 	\end{equation}
 	We will prove this inequality by considering it as a variational problem
 	on the distribution $\mu$ of $Y$. Specifically, we will show that
 	for every $a\in\left[-\alpha,\alpha\right]$, every distribution $\mu$
 	of $Y$ satisfies 
 	\begin{equation}
 	\abs{\tanh\left(\e\tanh^{-1}Y\right)-a}\leq20\tanh^{-1}\left(\alpha\right)\cdot\e\abs{Y-a}.\label{eq:how_to_show_what_we_want}
 	\end{equation}
 	Setting $a=\e Y$ then gives the desired result. 
 	
 	Suppose that $\e\abs{Y-a}$ is fixed. Then the left hand side of (\ref{eq:how_to_show_what_we_want})
 	is maximized by the $Y$ that gives $\tanh\left(\e\tanh^{-1}Y\right)$
 	an extremal value, conditioned on $b:=\e\abs{Y-a}$ being constant.
 	Since $\tanh$ is monotone, this is equivalent to finding the extremal
 	value of the integral 
 	\begin{equation}
 	\int\tanh^{-1}\left(x\right)d\mu\left(x\right)\label{eq:linear_thing_to_maximize}
 	\end{equation}
 	while maintaining the constraint 
 	\begin{equation}
 	b=\e\abs{Y-a}.\label{eq:linear_constraint_on_Y}
 	\end{equation}
 	The constraint (\ref{eq:linear_constraint_on_Y}) is of the form $\int f\left(x\right)d\mu=b$,
 	where $f\left(x\right)=\abs{x-a}$. By Theorems 2.1 and 3.2 and Proposition
 	3.1 in \cite{gerhard_1988}, the extremal distributions which solve
 	a system of $n$ constraints of the form $\int f_{i}\left(x\right)d\mu=c_{i}$
 	are linear combinations of no more than $n+1$ singletons, i.e delta
 	distributions. We can therefore write the extremal $\text{\ensuremath{\mu}}$
 	as 
 	\begin{equation}
 	\mu=p\delta\left(x\right)+\left(1-p\right)\delta\left(y\right)\label{eq:decomposition_of_mu}
 	\end{equation}
 	for some two real numbers $-\alpha\leq x,y\leq\alpha$ and $p\in\left[0,1\right]$.
 	Now, using the triangle inequality, we have that 
 	\[
 	\abs{\tanh\left(\e\tanh^{-1}Y\right)-a}\leq\abs{\tanh\left(\e\tanh^{-1}Y\right)-\e Y}+\e\abs{Y-a},
 	\]
 	so it is in fact enough to show that 
 	\[
 	\abs{\tanh\left(\e\tanh^{-1}Y\right)-\e Y}\leq19\tanh^{-1}\left(\alpha\right)\cdot\e\abs{Y-a},
 	\]
 	and since $\e\abs{Y-\e Y}\leq2\e\abs{Y-a}$ for every $a$, it actually
 	suffices to show that
 	\begin{equation}
 	\abs{\tanh\left(\e\tanh^{-1}Y\right)-\e Y}\leq9\tanh^{-1}\left(\alpha\right)\cdot\e\abs{Y-\e Y}.\label{eq:using_expectation_instead_of_a}
 	\end{equation}
 	Plugging the decomposition (\ref{eq:decomposition_of_mu}) into (\ref{eq:using_expectation_instead_of_a}),
 	we need to prove that for every such $x$ and $y$,
 	\[
 	\frac{\abs{\tanh\left(p\tanh^{-1}\left(x\right)+\left(1-p\right)\tanh^{-1}\left(y\right)\right)-\left(px+\left(1-p\right)y\right)}}{2p\left(1-p\right)\abs{x-y}\tanh^{-1}\left(\alpha\right)}\leq9.
 	\]
 	Assume without loss of generality that $x>0$ and $x>\abs y$. We
 	will now show that inequality is correct for $0<p\leq\frac{1}{2}$.
 	We omit the similar proof for $\frac{1}{2}\leq p<1$. For these values
 	of $p$, it suffices to show that 
 	\begin{equation}
 	\frac{\abs{\tanh\left(p\tanh^{-1}\left(x\right)+\left(1-p\right)\tanh^{-1}\left(y\right)\right)-\left(px+\left(1-p\right)y\right)}}{p\tanh^{-1}\left(\alpha\right)\left(x-y\right)}\leq9\cdot\label{eq:p_smaller_than_half}
 	\end{equation}
 	For every fixed value of $y$, we treat the expression on the left
 	hand side as a function of $p$ for $p\in\left(0,1\right)$. This
 	expression may attain its supremum either at $p\to0^{+}$, $p=\frac{1}{2}$,
 	or at values of $p$ such that the derivative of the left hand side
 	with respect to $p$ is 0. We'll now consider each of these three
 	cases.
 	
 	\subsubsection*{Taking the derivative}
 	
 	Comparing the derivative to $0$, one obtains the relation
 	
 	\begin{align*}
 	\tanh\left(p\tanh^{-1}\left(x\right)+\left(1-p\right)\tanh^{-1}\left(y\right)\right)-\left(px+\left(1-p\right)y\right) & =\\
 	\left(\frac{\tanh^{-1}\left(x\right)-\tanh^{-1}\left(y\right)}{\cosh^{2}\left(p\tanh^{-1}\left(x\right)+\left(1-p\right)\tanh^{-1}\left(y\right)\right)}-\left(x-y\right)\right)p.
 	\end{align*}
 	Plugging this back into (\ref{eq:p_smaller_than_half}) and using
 	the triangle inequality, it is enough to show that
 	\begin{equation}
 	\frac{\frac{\tanh^{-1}\left(x\right)-\tanh^{-1}\left(y\right)}{\cosh^{2}\left(p\tanh^{-1}\left(x\right)+\left(1-p\right)\tanh^{-1}\left(y\right)\right)}+\left(x-y\right)}{\tanh^{-1}\left(\alpha\right)\left(x-y\right)}\leq9.\label{eq:taking_derivative_part_way}
 	\end{equation}
 	Since $\tanh^{-1}\left(\alpha\right)\geq1$, the expression $\frac{\left(x-y\right)}{\tanh^{-1}\left(\alpha\right)\left(x-y\right)}$
 	is bounded by $1$, so it remains to show that 
 	\begin{equation}
 	\frac{\tanh^{-1}\left(x\right)-\tanh^{-1}\left(y\right)}{\tanh^{-1}\left(\alpha\right)\cosh^{2}\left(p\tanh^{-1}\left(x\right)+\left(1-p\right)\tanh^{-1}\left(y\right)\right)\left(x-y\right)}\leq8.\label{eq:tanhz_involving_cosh2}
 	\end{equation}
 	If $y<0$ and $x\geq\frac{1}{2}$, then $x-y>1/2$ and we trivially
 	have
 	\[
 	\frac{\tanh^{-1}\left(x\right)-\tanh^{-1}\left(y\right)}{\tanh^{-1}\left(\alpha\right)}\frac{1}{\cosh^{2}\left(p\tanh^{-1}\left(x\right)+\left(1-p\right)\tanh^{-1}\left(y\right)\right)\left(x-y\right)}\leq\frac{2}{\frac{1}{2}}=4.
 	\]
 	If $y<0$ and $x<\frac{1}{2}$ then 
 	\[
 	\tanh^{-1}\left(x\right)-\tanh^{-1}\left(y\right)\leq\frac{1}{1-x^{2}}\left(x-y\right)\leq\frac{4}{3}\left(x-y\right)
 	\]
 	and so 
 	\[
 	\frac{\tanh^{-1}\left(x\right)-\tanh^{-1}\left(y\right)}{\tanh^{-1}\left(\alpha\right)\cosh^{2}\left(p\tanh^{-1}\left(x\right)+\left(1-p\right)\tanh^{-1}\left(y\right)\right)\left(x-y\right)}\leq\frac{2}{\tanh^{-1}\left(\alpha\right)}<8.
 	\]
 	For $y\geq0$, the maximal w.r.t $p$ value of the left hand side
 	of (\ref{eq:tanhz_involving_cosh2}) is attained when the argument
 	of $\cosh^{2}$ is minimal, i.e at $p=0$. Using the fact that $\cosh\left(\tanh^{-1}\left(y\right)\right)=1/\sqrt{1-y^{2}}>1/\sqrt{2\left(1-y\right)}$
 	and that $\tanh^{-1}\left(x\right)=\frac{1}{2}\log\frac{1+x}{1-x}$,
 	it suffices to show that 
 	
 	\begin{equation}
 	\frac{\left(\log\frac{1+x}{1-x}\frac{1-y}{1+y}\right)\left(1-y\right)}{\tanh^{-1}\left(\alpha\right)\left(x-y\right)}\leq8.\label{eq:temp_tanh_0}
 	\end{equation}
 	We consider two cases. Suppose that $\frac{1-y}{1-x}\geq2$. For any
 	$z\geq2$, it holds that $\log2z\leq2\log z$, and since $x,y<1$,
 	it is enough to show that
 	\begin{equation}
 	2\frac{\left(\log\frac{1-y}{1-x}\right)\left(1-y\right)}{\tanh^{-1}\left(\alpha\right)\left(x-y\right)}\leq8.\label{eq:temp_tanh_1}
 	\end{equation}
 	Denote $1-x=e^{-a}$ and $1-y=e^{-b}$, with $a>b>0$; under this
 	notation, the left hand side becomes $2\frac{\left(a-b\right)}{\tanh^{-1}\left(\alpha\right)\left(1-e^{-\left(a-b\right)}\right)}$.
 	Note that $\tanh^{-1}\left(\alpha\right)=\frac{1}{2}\log\frac{1+\alpha}{1-\alpha}\geq\frac{1}{2}\log\frac{1}{1-x}=\frac{1}{2}a$.
 	If $e^{-\left(a-b\right)}<\frac{1}{2}$, then $2\frac{\left(a-b\right)}{\tanh^{-1}\left(\alpha\right)\left(1-e^{-\left(a-b\right)}\right)}\leq2\frac{\left(a-b\right)}{\frac{1}{2}a\left(\frac{1}{2}\right)}\leq8$.
 	Otherwise, if $e^{-\left(a-b\right)}\geq\frac{1}{2}$, then $a-b<\frac{3}{4}$.
 	By Taylor's theorem, the $1-e^{-\left(a-b\right)}$ in the denominator
 	can be bounded from below by $\frac{1}{2}\left(a-b\right)$, bounding
 	the expression by $\frac{8}{\tanh^{-1}\left(\alpha\right)}\leq8$. 
 	
 	Now suppose that $\frac{1-y}{1-x}<2$. Since $\log z\leq z-1$ for
 	all $z$, we may then write the left hand side of (\ref{eq:temp_tanh_0})
 	as
 	\begin{align*}
 	\frac{\left(\log\frac{1+x}{1+y}+\log\frac{1-y}{1-x}\right)\left(1-y\right)}{\tanh^{-1}\left(\alpha\right)\left(x-y\right)} & \leq\frac{1}{\tanh^{-1}\left(\alpha\right)}\left(\left(\frac{1+x}{1+y}-1\right)+\left(\frac{1-y}{1-x}-1\right)\right)\frac{1-y}{x-y}\\
 	& \leq\frac{1}{\tanh^{-1}\left(\alpha\right)}\left(\frac{1-y}{1+y}+\frac{1-y}{1-x}\right)\\
 	& \leq\frac{3}{\tanh^{-1}\left(\alpha\right)}<8.
 	\end{align*}
 	
 	\subsubsection*{The case $p=0$}
 	
 	Using L'Hôpital's rule, the value of the left hand side of (\ref{eq:p_smaller_than_half})
 	attained as $p\to0^{+}$ is 
 	\[
 	\frac{\abs{\frac{\tanh^{-1}\left(x\right)-\tanh^{-1}\left(y\right)}{\cosh^{2}\left(\tanh^{-1}\left(y\right)\right)}-\left(x-y\right)}}{\tanh^{-1}\left(\alpha\right)\left(x-y\right)}.
 	\]
 	For $y\geq0$, this is the same expression obtained by setting $p=0$
 	in (\ref{eq:taking_derivative_part_way}). The case $y<0$ is handled
 	similarly as above.
 	
 	\subsubsection*{The case $p=1/2$}
 	
 	In this case we must show that 
 	\[
 	\frac{\abs{\tanh\left(\frac{1}{2}\tanh^{-1}\left(x\right)+\frac{1}{2}\tanh^{-1}\left(y\right)\right)-\left(\frac{1}{2}x+\frac{1}{2}y\right)}}{\tanh^{-1}\left(\alpha\right)\left(x-y\right)}\leq\frac{9}{2}\cdot
 	\]
 	This bound can be shown by differentiating with respect to $y$ to
 	the find the maximum of the left hand side.
 	
 \end{proof}
 \begin{prop}
 	\label{prop:variance_of_f}Let $f:\mathcal C_{n}\to\bR$, let $\xi$ be
 	a product measure over $\mathcal C_{n}$, and let $Y\sim\xi$. Then 
 	\[
 	\e\abs{f\left(Y\right)-f\left(\e Y\right)}\leq\sqrt{n}\text{Lip}\left(f\right).
 	\]
 \end{prop}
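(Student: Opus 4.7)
The plan is to combine the mean-preservation property for multilinear extensions with a one-line Efron–Stein/bounded-differences computation.

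First, I would rewrite $f(\e Y)$ as $\e f(Y)$. Since $\xi$ is a product measure, Claim \ref{claim:exchange_expectation_of_product_measure} gives the identity $f(\e Y) = \e f(Y)$ directly, so
\[
\e\abs{f(Y) - f(\e Y)} = \e\abs{f(Y) - \e f(Y)}.
\]
By the Cauchy–Schwarz inequality, this quantity is bounded by $\sqrt{\var(f(Y))}$, so it suffices to show $\var(f(Y)) \leq n \cdot \text{Lip}(f)^2$.

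Next, I would bound the variance by the Efron–Stein inequality, which for a function of independent coordinates gives
\[
\var(f(Y)) \leq \sum_{i=1}^{n} \e\bigl[\var\bigl(f(Y) \,\bigm|\, Y_1,\dots,Y_{i-1},Y_{i+1},\dots,Y_n\bigr)\bigr].
\]
Conditioned on all coordinates except $Y_i$, the random variable $f(Y)$ takes only two values, differing by exactly $2\partial_i f$ (this is the defining identity for the discrete derivative in Definition \ref{def:discrete_gradient}). Since a two-valued random variable supported on points $a,b$ has variance at most $(b-a)^2/4$, each conditional variance is at most $(\partial_i f)^2 \leq \text{Lip}(f)^2$. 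Summing over $i$ gives $\var(f(Y)) \leq n \cdot \text{Lip}(f)^2$, and the claim follows.

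There is no real obstacle; the only subtlety is recognizing that Claim \ref{claim:exchange_expectation_of_product_measure} lets us replace $f(\e Y)$ with $\e f(Y)$ without paying any cost, after which the argument is a standard bounded-differences estimate. One could alternatively carry out the martingale-difference version by hand, writing $f(Y) - \e f(Y)$ as a telescoping sum of conditional expectations and bounding each martingale difference by $|\partial_i f| \leq \text{Lip}(f)$ before applying orthogonality; this would give the same $n\cdot\text{Lip}(f)^2$ bound on the variance.
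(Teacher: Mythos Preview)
Your proposal is correct and essentially matches the paper's proof: both identify $f(\e Y)=\e f(Y)$, pass to the variance via Jensen/Cauchy--Schwarz, and bound $\var(f(Y))\le n\,\text{Lip}(f)^2$. The only cosmetic difference is that the paper uses the Doob martingale decomposition $M_i=\e[f(Y)\mid Y_1,\dots,Y_i]$ (exactly the alternative you mention at the end), whereas you phrase the same estimate via Efron--Stein; these are equivalent here and yield the same bound.
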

 
 \begin{proof}
 	Let $M_{i}=\e\left[f\left(Y\right)\mid Y_{1},\ldots,Y_{i}\right]$.
 	Then the variance of $f$ can be bounded by 
 	\[
 	\var\left[f\left(Y\right)\right]=\sum_{i=1}^{n}\e\left(M_{i}-M_{i-1}\right)^{2}\leq\text{Lip}^{2}\left(f\right)\sum_{i=1}^{n}\var\left[Y_{i}\right]\leq n\text{Lip}^{2}\left(f\right).
 	\]
 	By Jensen's inequality, 
 	\[
 	\e\abs{f\left(Y\right)-f\left(\e Y\right)}=\e\sqrt{\left(f\left(Y\right)-f\left(\e Y\right)\right)^{2}}\leq\sqrt{\e\left(f\left(Y\right)-f\left(\e Y\right)\right)^{2}}=\sqrt{\var\left[f\left(Y\right)\right]}.
 	\]
 \end{proof}
 \begin{proof}[Proof of the chain rule Lemma \ref{lem:chain_rule}]
 	For $y\in\mathcal C_{n}$ in the discrete hypercube, denote by $S_{i}\left(y\right)$
 	the vector which is equal to $y$ everywhere, except for the $i$-th
 	entry, so that 
 	\[
 	\left(S_{i}\left(y\right)\right)_{j}=\begin{cases}
 	y_{j} & i\neq j\\
 	-y_{j} & i=j.
 	\end{cases}
 	\]
 	Using this notation, we have that 
 	\begin{equation}
 	\abs{\partial_{i}\left(h\circ f\right)\left(y\right)-h'\left(f\left(y\right)\right)\partial_{i}f\left(y\right)}=\abs{-y_{i}\frac{h\left(f\left(S_{i}\left(y\right)\right)\right)-h\left(f\left(y\right)\right)}{2}-h'\left(f\left(y\right)\right)\partial_{i}f\left(y\right)}.\label{eq:using_s_for_h}
 	\end{equation}
 	Using Taylor's theorem for $h$ around $f\left(y\right)$ with the
 	Lagrange remainder, there exists a $z\in\left[f\left(y\right),f\left(S_{i}\left(y\right)\right)\right]$
 	such that 
 	\[
 	h\left(f\left(S_{i}\left(y\right)\right)\right)-h\left(f\left(y\right)\right)=\left(f\left(S_{i}\left(y\right)\right)-f\left(y\right)\right)h'\left(f\left(y\right)\right)+\frac{1}{2}\left(f\left(S_{i}\left(y\right)\right)-f\left(y\right)\right)^{2}h''\left(z\right).
 	\]
 	Putting this into equation (\ref{eq:using_s_for_h}), we get
 	
 	\begin{align*}
 	& \abs{\partial_{i}h\left(f\left(y\right)\right)-h'\left(f\left(y\right)\right)\partial_{i}f\left(y\right)}=\\
 	& =\left|-\frac{1}{2}y_{i}\left(\left(f\left(S_{i}\left(y\right)\right)-f\left(y\right)\right)h'\left(f\left(y\right)\right)+\frac{1}{2}\left(f\left(S_{i}\left(y\right)\right)-f\left(y\right)\right)^{2}h''\left(z\right)\right)-\right.\\
 	& \,\,\,\,\,\,\,\,\,\,\,\,\,\left.-h'\left(f\left(y\right)\right)\partial_{i}f\left(y\right)\right|\\
 	& =\abs{\partial_{i}f\left(y\right)h'\left(f\left(y\right)\right)-\frac{y_{i}}{4}\left(f\left(S_{i}\left(y\right)\right)-f\left(y\right)\right)^{2}h''\left(z\right)-h'\left(f\left(y\right)\right)\partial_{i}f\left(y\right)}\\
 	& =\abs{\frac{y_{i}}{4}\left(f\left(S_{i}\left(y\right)\right)-f\left(y\right)\right)^{2}h''\left(z\right)}\\
 	& =\abs{\partial_{i}f\left(y\right)}^{2}\abs{h''\left(z\right)}\\
 	& \leq BL^{2}.
 	\end{align*}
 	Equations (\ref{eq:chain_rule_one_norm}) and (\ref{eq:chain_rule_two_norm})
 	then follow immediately. 
 	
 	For equation (\ref{eq:chain_rule_expectation}), let $x\in\overline{\mathcal C_{n}}$
 	and let $\xi$ be the product measure on $\mathcal C_{n}$ such that for
 	$Y\sim\xi$, $\e Y=x$. Applying equation (\ref{eq:expectation_switcharoo_gradient})
 	on $\grad f$ and $\grad\left(h\circ f\right)$, we have
 	
 	\begin{align*}
 	\onenorm{h'\left(f\left(\e Y\right)\right)\grad f\left(\e Y\right)-\e\grad\left(h\circ f\right)\left(Y\right)} & =\onenorm{h'\left(f\left(\e Y\right)\right)\e\grad f\left(Y\right)-\e\grad\left(h\circ f\right)\left(Y\right)}\\
 	& \leq\onenorm{\e\left[h'\left(f\left(\e Y\right)\right)\grad f\left(Y\right)-h'\left(f\left(Y\right)\right)\grad f\left(Y\right)\right]}+\\
 	& \,\,\,\,\,\,\,\,\,\,+\onenorm{\e\left[h'\left(f\left(Y\right)\right)\grad f\left(Y\right)-\grad\left(h\circ f\right)\left(Y\right)\right]}.
 	\end{align*}
 	By equation (\ref{eq:chain_rule_one_norm}), the second term on the
 	right hand side is bounded by $BL^{2}n$. AS for the first term, 
 	\begin{align*}
 	\onenorm{\e\left[\left(h'\left(f\left(\e Y\right)\right)-h'\left(f\left(Y\right)\right)\right)\grad f\left(Y\right)\right]} & \leq B\e\onenorm{\abs{f\left(\e Y\right)-f\left(Y\right)}\grad f\left(Y\right)}\\
 	& \leq B\e\abs{f\left(\e Y\right)-f\left(Y\right)}nL\\
 	\left(\text{by Proposition \ref{prop:variance_of_f}}\right) & \leq BL^{2}n^{3/2}.
 	\end{align*}
 	Thus $\onenorm{h'\left(f\left(\e Y\right)\right)\grad f\left(\e Y\right)-\grad\left(h\circ f\right)\left(\e Y\right)}\leq2BL^{2}n^{3/2}$.
 \end{proof}
 \begin{proof}[Proof of Lemma \ref{lem:composition_parameters}]
 	For a vector $y\in\mathcal C_{n}$ and an index $i=1,\ldots,n$, denote
 	by $y_{i}^{+}$ the vector $y_{i}^{+}=\left(y_{1},y_{2},\ldots,y_{i-1},1,y_{i+1},\ldots,y_{n}\right)$,
 	and by $y_{i}^{-}$ the vector $y_{i}^{-}=\left(y_{1},y_{2},\ldots,y_{i-1},-1,y_{i+1},\ldots,y_{n}\right)$.
 	\begin{itemize}
 		\item \textbf{The gradient complexity}: Denote 
 		\[
 		\mathcal A_{f}=\left\{ \grad f\left(y\right):y\in\mathcal C_{n}\right\} ,\,\,\,\,\mathcal A_{h}=\left\{ \grad\left(h\circ f\right)\left(y\right):y\in\mathcal C_{n}\right\} .
 		\]
 		By equation (\ref{eq:chain_rule_two_norm}), we have that for every
 		vector $v\in\bR^{n}$,
 		\[
 		\sup_{u\in\mathcal A_{h}}\left\langle u,v\right\rangle \leq\max\left(0,B_{1}\sup_{u\in\mathcal A_{f}}\left\langle u,v\right\rangle \right)+\sqrt{n}B_{2}L_{1}^{2}\norm v_{2}.
 		\]
 		Since the expected norm of a Gaussian random vector $\Gamma$ satisfies
 		$\e\norm{\Gamma}_{2}\leq\sqrt{n}$, we get that
 		\[
 		\mathcal D\left(h\circ f\right)=\e\sup_{u\in\mathcal A_{h}}\left\langle u,\Gamma\right\rangle \leq B_{1}\mathcal D\left(f\right)+B_{2}L_{1}^{2}n.
 		\]
 		\item \textbf{The Lipschitz constant}: for every $y\in\mathcal C_{n}$ and every
 		$i=1,\ldots,n$,
 		\begin{align*}
 		\abs{\partial_{i}\left(h\circ f\right)\left(y\right)} & =\abs{\frac{h\left(f\left(y_{i}^{+}\right)\right)-h\left(f\left(y_{i}^{-}\right)\right)}{2}}\\
 		& \leq B_{1}\abs{\frac{f\left(y_{i}^{+}\right)-f\left(y_{i}^{-}\right)}{2}}\leq B_{1}L_{1}.
 		\end{align*}
 		Thus $\text{Lip}\left(h\circ f\right)\leq B_{1}L_{1}$.
 		\item \textbf{The Lipschitz constant of the gradient}: Let $x\neq y\in\mathcal C_{n}$.
 		By Lemma \ref{lem:chain_rule}: 
 		\begin{align*}
 		\norm{\grad\left(h\circ f\right)\left(x\right)-\grad\left(h\circ f\right)\left(y\right)}_{1} & =\left\Vert \grad\left(h\circ f\right)\left(x\right)-h'\left(f\left(x\right)\right)\grad f\left(x\right)+h'\left(f\left(x\right)\right)\grad f\left(x\right)-\right.\\
 		& \,\,\,\,\,\,\,\,\,\,\left.-\grad\left(h\circ f\right)\left(y\right)-h'\left(f\left(y\right)\right)\grad f\left(y\right)+h'\left(f\left(y\right)\right)\grad f\left(y\right)\right\Vert _{1}\\
 		& \leq2nB_{2}L_{1}^{2}+\norm{h'\left(f\left(x\right)\right)\grad f\left(x\right)-h'\left(f\left(y\right)\right)\grad f\left(y\right)}_{1}.
 		\end{align*}
 		The last term on the right hand side can be bounded by 
 		\begin{align*}
 		\norm{h'\left(f\left(x\right)\right)\grad f\left(x\right)-h'\left(f\left(y\right)\right)\grad f\left(y\right)} & \leq\onenorm{h'\left(f\left(x\right)\right)\grad f\left(x\right)-h'\left(f\left(x\right)\right)\grad f\left(y\right)}\\
 		& \,\,\,\,\,\,\,\,\,\,+\onenorm{h'\left(f\left(x\right)\right)\grad f\left(y\right)-h'\left(f\left(y\right)\right)\grad f\left(y\right)}\\
 		& \leq B_{1}\norm{\grad f\left(x\right)-\grad f\left(y\right)}_{1}+B_{2}\abs{f\left(x\right)-f\left(y\right)}\onenorm{\grad f\left(y\right)}\\
 		& \leq B_{1}L_{2}\norm{x-y}_{1}+B_{2}L_{1}\onenorm{x-y}L_{1}n.
 		\end{align*}
 		Putting the terms together, we get
 		\[
 		\frac{\norm{\grad\left(h\circ f\right)\left(x\right)-\grad\left(h\circ f\right)\left(y\right)}_{1}}{\onenorm{x-y}}\leq B_{1}L_{2}+3B_{2}L_{1}^{2}n.
 		\]
 	\end{itemize}
 \end{proof}
 
 \begin{proof}[Proof of Proposition \ref{prop:phi_and_g_are_close}]
 	We will show that the total variation distance between $X_{n}^{g}$
 	and $X_{\varphi}$ satisfies 
 	\[
 	\text{TV}\left(\nu,\sigma\right)\leq2\cdot2^{-n};
 	\]
 	the proof of the proposition then follows immediately. Denote by $Z_{g}$
 	and $Z_{\varphi}$ the normalizing constants of $\nu$ and $\sigma$,
 	respectively. Then 
 	\[
 	Z_{g}=Z_{\varphi}+\sum_{y\,s.t\,f\left(y\right)\leq\left(t-\delta'\right)n}e^{g\left(y\right)},
 	\]
 	and by the proof of Proposition \ref{prop:small_tail}, this implies
 	that 
 	\[
 	\eps:=\abs{Z_{g}-Z_{\varphi}}\leq2^{-n}.
 	\]
 	The total variation distance is then given by
 	\begin{align*}
 	\text{TV}\left(\nu,\sigma\right) & =\frac{1}{2}\sum_{y\in\mathcal C_{n}}\abs{\frac{e^{g\left(y\right)}}{Z_{g}}-\frac{\varphi\left(y\right)}{Z_{\varphi}}}\\
 	& =\frac{1}{2}\sum_{f\left(y\right)<\left(t-\delta'\right)n}\abs{\frac{e^{g\left(y\right)}}{Z_{g}}-\frac{\varphi\left(y\right)}{Z_{\varphi}}}+\frac{1}{2}\sum_{f\left(y\right)\geq\left(t-\delta'\right)n}\abs{\frac{e^{g\left(y\right)}}{Z_{g}}-\frac{\varphi\left(y\right)}{Z_{\varphi}}}.
 	\end{align*}
 	By definition of $\varphi$ and by Proposition \ref{prop:small_tail},
 	the first term on the right hand side is bounded by
 	\begin{align*}
 	\frac{1}{2}\sum_{f\left(y\right)<\left(t-\delta'\right)n}\abs{\frac{e^{g\left(y\right)}}{Z_{g}}-\frac{\varphi\left(y\right)}{Z_{\varphi}}} & =\frac{1}{2}\sum_{f\left(y\right)<\left(t-\delta'\right)n}\frac{e^{g\left(y\right)}}{Z_{g}}\leq\frac{1}{2}\cdot2^{-n}
 	\end{align*}
 	The second term is bounded by
 	\begin{align*}
 	\frac{1}{2}\sum_{f\left(y\right)\geq\left(t-\delta'\right)n}\abs{\frac{e^{g\left(y\right)}}{Z_{g}}-\frac{\varphi\left(y\right)}{Z_{\varphi}}} & =\frac{1}{2}\sum\varphi\left(y\right)\abs{\frac{1}{Z_{\varphi}+\eps}-\frac{1}{Z_{\varphi}}}\\
 	& =\frac{1}{2}\sum\frac{\varphi\left(y\right)}{Z_{\varphi}}\abs{\frac{1}{\left(1+\frac{\eps}{Z_{\varphi}}\right)}-1}\\
 	& \leq\frac{1}{2}\sum\frac{\varphi\left(y\right)}{Z_{\varphi}}\abs{\frac{\eps}{Z_{\varphi}}+\frac{1}{2}\frac{\eps^{2}}{Z_{\varphi}^{2}}}\\
 	& \leq\frac{1}{2}\sum\frac{\varphi\left(y\right)}{Z_{\varphi}}\abs{\frac{2\eps}{Z_{\varphi}}}\leq2^{-n}.
 	\end{align*}
 \end{proof}

\end{document}